\documentclass[onefignum,onetabnum]{siamart251216}

\usepackage{amsfonts}
\usepackage{amsmath}
\usepackage{amssymb}
\usepackage{mathtools}
\usepackage{algorithm}
\usepackage{algpseudocode}

\headers
{Adaptive Multiresolution Diffusion Operators}
{C. Tantardini et al.}

\title{
Adaptive Multiresolution Diffusion Operators:
A Variational Theory on Evolving Multiresolution Spaces
}

\author{
Christian Tantardini
\thanks{Center for Integrative Petroleum Research,
King Fahd University of Petroleum and Minerals,
Dhahran 31261, Kingdom of Saudi Arabia
(\email{christiantantardini@ymail.com}).
}
\and
Stig Rune Jensen
\thanks{
Hylleraas Centre for Quantum Molecular Sciences,
Department of Chemistry,
UiT The Arctic University of Norway,
N-9037 Tromsø, Norway
(\email{stig.r.jensen@uit.no}).
}
\and
Roberto Di Remigio Eik{\AA}s
\thanks{Algorithmiq S.r.l., c/o STLex, Via della Chiusa 15, 20123, Milano, Italia
(\email{roberto@algorithmiq.tech})
}
\and
Joakim Henrik Beck
\thanks{
Center for Integrative Petroleum Research,
King Fahd University of Petroleum and Minerals,
Dhahran 31261, Kingdom of Saudi Arabia.
(\email{joakim.beck@kfupm.edu.sa}).
}
}

\usepackage{hyperref}
\usepackage{cleveref}

\begin{document}

\maketitle

\begin{abstract}
We develop a variational framework for state-dependent diffusion on adaptive multiresolution representations in which the diffusion operator is generated by the adaptive state itself. The state consists of an admissible multiresolution tree, its active approximation space and basis, and the corresponding coefficient representation. It determines a symmetric nonnegative interaction form and an associated positive semidefinite intrinsic diffusion operator. In contrast to classical adaptive wavelet methods, where a prescribed operator is represented on an evolving approximation space, refinement and coarsening here modify simultaneously the representation, interaction graph, and operator.
Because the adaptive hierarchy evolves through discrete topological changes, the coupled dynamics are formulated through a time-discrete variational principle rather than a differential evolution on a fixed space. We establish existence of the discrete updates, a discrete energy inequality, the coefficient-space null mode, and contractivity for frozen adaptive states. For regularized inverse problems, the construction yields Adaptive Multiresolution Diffusion Imaging (AMDI), combining data fidelity, intrinsic diffusion, coefficient sparsity, and tree complexity in a state-dependent energy.
Numerical experiments verify the assembled operator identities, examine refinement-commutator decay, and confirm discrete energy dissipation. Adaptive Haar and higher-order multiwavelet calculations demonstrate localization of resolution on heterogeneous data. In denoising, AMDI retains high structural reconstruction quality with less than 10\% of the full active representation, with stable behavior across held-out noise realizations.
\end{abstract}

\begin{keywords}
adaptive multiresolution methods, diffusion operators, wavelets,
variational methods, inverse problems
\end{keywords}

\begin{MSCcodes}
65N12, 65N30, 65T60, 49J40
\end{MSCcodes}

\section{Introduction}
\label{sec:introduction}

Adaptive numerical methods provide one of the most successful frameworks for
the numerical solution of partial differential equations, integral equations,
and variational problems. Their central objective is to distribute
computational effort only where it is required, thereby achieving high
accuracy with substantially fewer degrees of freedom than uniform
discretizations. Over the past decades this philosophy has led to rigorous
theories of adaptive mesh refinement, adaptive finite elements, nonlinear
approximation, and adaptive wavelet methods, establishing optimal convergence,
error control, and computational complexity for broad classes of operator
equations
\cite{Brandt1977,
BergerOliger1984,
Dorfler1996,
MorinNochettoSiebert2002,
BinevDahmenDeVore2004,
DeVore1998,
Temlyakov2011,
Dahmen1997,
Dahmen2001JCAM,
Cohen2003Book}.

Among these developments, adaptive wavelet methods occupy a distinguished
position because they combine hierarchical approximation, sparse
representations, and rigorous convergence theory within a unified framework.
Wavelet representations yield near-optimal algorithms for broad classes of
linear and nonlinear operator equations while providing mathematically
controlled refinement and coarsening
\cite{Beylkin1992Operators,
BeylkinCoifmanRokhlin1991,
Alpert1993,
AlpertBeylkinCoifmanRokhlin1993,
BeylkinKeiser1997,
CohenDahmenDeVore2001,
CohenDahmenDeVore2002,
CohenDahmenDeVore2005,
Stevenson2003,
Stevenson2004Compressibility,
DahlkeDahmenUrban2002,
DahmenKunoth1992}.
In these methods, the adaptive approximation space is constructed for a
differential, integral, or variational operator that is specified independently
of the adaptive hierarchy.

A complementary direction concerns the efficient representation of the
operators themselves. Wavelet compression, nonstandard operator forms,
hierarchical matrices, tensor representations, multiresolution operator
calculus, and fast summation methods have shown that dense and nonlocal
operators can often be approximated with substantially reduced computational
complexity
\cite{Beylkin1992Operators,
BeylkinMohlenkamp2002,
BeylkinMohlenkamp2005SIAM,
HackbuschBorm2002,
Hackbusch2017,
GreengardRokhlin1987,
BeylkinMonzon2002,
BeylkinMonzon2005}.
These ideas have led to adaptive multiwavelet computational environments in
which functions and operators are represented and evaluated directly at a
prescribed numerical accuracy
\cite{HarrisonEtAl2004,
FannEtAl2004,
HarrisonEtAlMADNESS2008,
FredianiFossgaardFlaRuud2013,
WindEtAl2023,
BjorgveEtAl2024,
DinvayZabelinaFrediani2024}.

Diffusion operators also play a central role in multiscale data analysis.
Diffusion maps, diffusion wavelets, spectral graph methods, and manifold
learning use the spectral properties of a prescribed diffusion or graph
Laplacian to construct coordinates and multiresolution bases adapted to the
geometry of complex datasets
\cite{CoifmanLafon2006,
CoifmanMaggioni2006,
CoifmanEtAl2005,
BelkinNiyogi2003,
BelkinNiyogi2008,
Singer2006,
SingerWu2012,
Chung1997}.
In these constructions, however, the diffusion operator is defined before the
multiresolution representation and subsequently determines the basis.

State-dependent diffusion and graph regularization are also well established
in image processing.  Nonlinear anisotropic diffusion, nonlocal filtering,
nonlocal variational models, and graph-based $p$-Laplacian regularization
construct diffusivities or interaction weights from the current signal or
image itself
\cite{PeronaMalik1990,
BuadesCollMorel2005,
GilboaOsher2008,
ElmoatazLezorayBougleux2008}.
In these approaches, however, the underlying set of degrees of freedom on
which the operator acts is fixed independently of the state-dependent
weights.  The distinction pursued here is different: the active
multiresolution index set, the approximation space, the interaction graph,
and the interaction weights are all components of the evolving adaptive
state.  Refinement or coarsening therefore changes not only the coefficients
or edge weights, but also the finite-dimensional space and graph on which the
intrinsic operator is defined.

More recently, operator-learning methods such as DeepONet, Fourier Neural
Operators, Graph Kernel Networks, Multiwavelet Neural Operators, and Wavelet
Neural Operators have demonstrated that nonlinear mappings between
infinite-dimensional function spaces can be approximated directly from data
\cite{LuEtAl2021DeepONet,
LiEtAl2021FNO,
LiKovachkiAzizzadenesheli2020,
GuptaXiaoBogdan2021,
TripuraChakraborty2023,
KovachkiEtAl2023}.
Although these approaches differ substantially in their mathematical
formulation and numerical implementation, the operator or the class of
operators to be learned is defined independently of the adaptive
multiresolution state.

The existing directions may therefore be summarized schematically as
\begin{align}
\text{fixed operator}
&\longrightarrow
\text{adaptive approximation},
\\
\text{fixed operator}
&\longrightarrow
\text{operator compression},
\\
\text{fixed diffusion operator}
&\longrightarrow
\text{multiresolution basis},
\\
\text{prescribed operator family}
&\longrightarrow
\text{learned operator surrogate}.
\end{align}
These constructions cover fixed-operator approximation, operator compression,
operator-induced multiresolution bases, and learned surrogates.  State-dependent
diffusion methods additionally allow the interaction weights to depend on the
current signal.  What remains fixed in these settings is the representation
on which the operator acts.  The present construction instead couples the
active multiresolution index set, its approximation space, the interaction
topology, the state-dependent weights, and the represented coefficients.

The central idea of the present work is therefore to make the adaptive
multiresolution representation itself part of the operator state. 
This reversal is not merely a change of perspective. Once the operator is
generated by the adaptive state itself, the governing dynamics can no longer
be separated from the evolving approximation space. The operator becomes an
intrinsic component of the state, and every refinement or coarsening event
changes both the representation and the diffusion process simultaneously.
Rather than
prescribing the diffusion operator in advance, we derive it directly from the
adaptive multiresolution representation. The adaptive hierarchy is therefore
not merely a numerical discretization of a previously defined model but becomes
part of the mathematical definition of the dynamics. This leads to the
construction
\begin{align}
\text{adaptive multiresolution state}
\longrightarrow
\text{intrinsic diffusion operator},
\label{eq:reverse_direction}
\end{align}
in which the operator is generated by the geometry of the active adaptive tree,
the multiwavelet basis, and the evolving expansion coefficients.

The mathematical foundation of the proposed framework is the adaptive
multiresolution state
\begin{align}
\mathfrak V
=
\left(
\Lambda,
V_{\Lambda},
\Phi_{\Lambda},
\mathcal T_{\Lambda},
\mathbf c
\right),
\label{eq:adaptive_state_intro}
\end{align}
where $\Lambda$ is an admissible adaptive index tree,
$V_{\Lambda}$ is the associated approximation space,
$\Phi_{\Lambda}$ is the active multiwavelet basis,
$\mathcal T_{\Lambda}$ describes the hierarchical geometry, and
$\mathbf c$ denotes the vector of expansion coefficients. From this state, we
construct a symmetric positive semidefinite interaction form and the associated
intrinsic diffusion operator. Because both the adaptive tree and the
coefficients evolve, the operator is itself a state-dependent dynamical
quantity.

Consequently, the adaptive state becomes the primary mathematical variable of
the theory, while the diffusion operator is regarded as a derived quantity
rather than an independent model input.

Because the adaptive hierarchy changes through discrete topological events,
its evolution cannot naturally be described by a classical differential
equation on a fixed function space.  We therefore formulate the coupled
evolution directly through a time-discrete variational scheme in which the
adaptive tree and the represented function are updated by minimizing a common
incremental functional.  The construction is motivated by minimizing-movement
and variational-evolution ideas
\cite{Ambrosio1995,
GianazzaGobbinoSavare1994,
AmbrosioGigliSavare2008,
Braides2002,
Braides2014,
MielkeOrtiz2008},
but the present work concerns the discrete adaptive evolution itself.  In
particular, no continuous-time limit as the time step tends to zero is assumed
or established here.

The principal contribution of this work is therefore a variational framework
in which adaptive multiresolution geometry, sparse approximation, and the
governing diffusion operator are treated as a single evolving mathematical
object. Rather than approximating a prescribed operator on an adaptive space,
the operator itself is generated by the adaptive state and evolves together
with it. This removes the classical distinction between the governing operator and its adaptive discretization and leads naturally to a coupled variational evolution of the coefficients and the adaptive hierarchy. Within this framework, we establish
the basic structural properties of the intrinsic operator, formulate the
time-discrete variational dynamics, and extend the construction to
inverse problems and adaptive multiresolution diffusion imaging.

The remainder of the paper is organized as follows. 
Section~\ref{sec:spaces} introduces the
ambient Hilbert space, admissible adaptive multiresolution states, and the
associated function spaces. Section~\ref{sec:intrinsic_operator} constructs
the intrinsic adaptive diffusion operator, while
Sections~\ref{sec:variational} and~\ref{sec:discrete_evolution} develop its
variational formulation and the coupled variational evolution.
Section~\ref{sec:analysis} establishes the principal analytical properties,
including existence, stability, energy dissipation, and contractivity.
Section~\ref{sec:inverse_problems} extends the framework to inverse problems
and adaptive multiresolution diffusion imaging, and
Section~\ref{sec:algorithm} presents the corresponding adaptive numerical
algorithm. Finally,
Section~\ref{sec:numerical_validation} outlines the numerical validation
strategy, and
Section~\ref{sec:conclusions} summarizes the main results and discusses future
research directions.

\section{Adaptive Multiresolution Function Spaces}
\label{sec:spaces}
 
We first specify the fixed ambient Hilbert space and the multiresolution
structure from which the admissible adaptive states are constructed. Although
the active approximation spaces have tree-dependent finite dimensions, they
are all realized as subspaces of the same ambient Hilbert space. This provides
a common setting in which states associated with different adaptive trees can
be compared and transferred.

\subsection{Ambient Hilbert space and multiresolution hierarchy}

Let $\Omega\subset\mathbb R^{d}$ be a bounded Lipschitz domain and let
\begin{align}
H
=
L^{2}(\Omega)
\label{eq:ambient_H}
\end{align}
be the fixed ambient Hilbert space, equipped with inner product
$\langle\cdot,\cdot\rangle_{H}$ and norm $\|\cdot\|_{H}$.
All adaptive approximation spaces introduced below are finite-dimensional
subspaces of $H$.

Let
\begin{align}
V_{0}
\subset
V_{1}
\subset
V_{2}
\subset
\cdots
\subset
H
\label{eq:nested_spaces}
\end{align}
be a nested multiresolution hierarchy satisfying
\begin{align}
\overline{
\bigcup_{\ell\geq 0}V_{\ell}
}^{\,H}
=
H.
\label{eq:density_hierarchy}
\end{align}
For each level $\ell\geq0$, let $W_{\ell}$ denote a detail space such that
\begin{align}
V_{\ell+1}
=
V_{\ell}
\oplus
W_{\ell},
\label{eq:detail_space}
\end{align}
where $\oplus$ denotes a direct sum. In the orthogonal case,
\begin{align}
W_{\ell}
=
V_{\ell+1}\ominus V_{\ell}.
\label{eq:orthogonal_detail_space}
\end{align}

Let $\mathcal I$ be the countable index set associated with the resulting
multiresolution dictionary
\begin{align}
\Psi
=
\left\{
\psi_{\lambda}
:
\lambda\in\mathcal I
\right\}
\subset H.
\label{eq:ambient_dictionary}
\end{align}
The subset $\mathcal I_{0}\subset\mathcal I$ indexes the coarsest-level
functions, while the remaining indices are associated with the detail spaces
$W_{\ell}$. For a detail function, we write
\begin{align}
\lambda
=
\left(
\ell,
k,
q
\right),
\label{eq:multi_index}
\end{align}
where $\ell$ denotes the scale level, $k$ the spatial index, and $q$ the
local polynomial or multiwavelet component.

We assume that the functions indexed by $\mathcal I_{0}$ span $V_{0}$ and
that the remaining basis functions are organized according to the detail
spaces $W_{\ell}$. Thus the multiresolution hierarchy and the dictionary
$\Psi$ describe the same underlying decomposition of $H$.

We assume that $\Psi$ is a Riesz basis of $H$. Hence there exist constants
$0<A_{\Psi}\leq B_{\Psi}<\infty$ such that, for every
$\mathbf c=(c_{\lambda})_{\lambda\in\mathcal I}\in\ell^{2}(\mathcal I)$,
\begin{align}
A_{\Psi}
\|\mathbf c\|_{\ell^{2}(\mathcal I)}^{2}
\leq
\left\|
\sum_{\lambda\in\mathcal I}
c_{\lambda}\psi_{\lambda}
\right\|_{H}^{2}
\leq
B_{\Psi}
\|\mathbf c\|_{\ell^{2}(\mathcal I)}^{2}.
\label{eq:Riesz_bounds}
\end{align}

Because the state-dependent interaction weights introduced below compare
coefficient amplitudes associated with different indices, the normalization
of the multiresolution dictionary is regarded as part of the representation.
Throughout the paper, we use the fixed normalization
\begin{align}
\|\psi_{\lambda}\|_{H}
=
1,
\qquad
\lambda\in\mathcal I.
\label{eq:basis_H_normalization}
\end{align}
This entails no loss of the Riesz-basis property.  Indeed,
\eqref{eq:Riesz_bounds} implies uniform upper and lower bounds for the
individual basis-function norms, so diagonal normalization defines a bounded
and boundedly invertible rescaling of the coefficient space.  With this
normalization fixed, coefficient differences appearing in the interaction
kernels are unambiguous; scale dependence of the interaction itself is
represented explicitly through $\kappa_{\ell}$.
The series converges in $H$. In particular, every finite subset of $\Psi$
is linearly independent, and its coefficient representation is unique.

\subsection{Admissible adaptive trees}

We assume that the coarsest-level index set $\mathcal I_{0}$ is finite.
The refinement hierarchy induces a parent map
\begin{align}
\operatorname{par}
:
\mathcal I\setminus\mathcal I_{0}
\longrightarrow
\mathcal I,
\label{eq:parent_map}
\end{align}
which associates each non-root index with its parent in the multiresolution
hierarchy.

A finite set $\Lambda\subset\mathcal I$ is called
\emph{tree admissible} if
\begin{align}
\mathcal I_{0}
\subseteq
\Lambda
\label{eq:root_inclusion}
\end{align}
and
\begin{align}
\lambda\in\Lambda\setminus\mathcal I_{0}
\quad
\Longrightarrow
\quad
\operatorname{par}(\lambda)\in\Lambda.
\label{eq:tree_admissibility}
\end{align}
Thus, every active detail function is accompanied by its complete ancestral
path to the coarsest level.

The class of finite admissible trees is
\begin{align}
\mathbb T_{\mathrm{adm}}
=
\left\{
\Lambda\subset\mathcal I
:
|\Lambda|<\infty,\;
\mathcal I_{0}\subseteq\Lambda,\;
\Lambda\ \text{satisfies \eqref{eq:tree_admissibility}}
\right\}.
\label{eq:admissible_tree_class}
\end{align}

Each $\Lambda\in\mathbb T_{\mathrm{adm}}$ inherits the parent--child
structure of the ambient multiresolution hierarchy. We denote the resulting
finite rooted graph by
\begin{align}
\mathcal T_{\Lambda}
=
\left(
\Lambda,
\mathcal E_{\Lambda}^{\mathrm{par}}
\right),
\label{eq:adaptive_tree_graph}
\end{align}
where
\begin{align}
\mathcal E_{\Lambda}^{\mathrm{par}}
=
\left\{
\left(
\operatorname{par}(\lambda),
\lambda
\right)
:
\lambda\in\Lambda\setminus\mathcal I_{0}
\right\}.
\label{eq:adaptive_tree_edges}
\end{align}
If $\mathcal I_{0}$ contains more than one coarsest-level function,
$\mathcal T_{\Lambda}$ is formally a rooted forest; throughout the paper we
use the term adaptive tree for this structure.

For each $\Lambda\in\mathbb T_{\mathrm{adm}}$, define
\begin{align}
\Phi_{\Lambda}
&=
\left\{
\psi_{\lambda}
\right\}_{\lambda\in\Lambda},
\label{eq:active_basis}
\\
V_{\Lambda}
&=
\operatorname{span}\Phi_{\Lambda}
\subset H,
\label{eq:active_space}
\\
N_{\Lambda}
&=
\dim V_{\Lambda}
=
|\Lambda|.
\label{eq:active_dimension}
\end{align}

Every $u_{\Lambda}\in V_{\Lambda}$ has the representation
\begin{align}
u_{\Lambda}
=
\sum_{\lambda\in\Lambda}
c_{\lambda}\psi_{\lambda}.
\label{eq:adaptive_expansion}
\end{align}

By the Riesz-basis property, the representation
\eqref{eq:adaptive_expansion} is unique and satisfies
\begin{align}
A_{\Psi}
\|\mathbf c_{\Lambda}\|_{\ell^{2}(\Lambda)}^{2}
\leq
\|u_{\Lambda}\|_{H}^{2}
\leq
B_{\Psi}
\|\mathbf c_{\Lambda}\|_{\ell^{2}(\Lambda)}^{2}.
\label{eq:active_Riesz_bounds}
\end{align}
where
\begin{align}
\mathbf c_{\Lambda}
=
\left(
c_{\lambda}
\right)_{\lambda\in\Lambda}
\in
\mathbb R^{N_{\Lambda}}.
\label{eq:coefficient_vector}
\end{align}

\subsection{Adaptive multiresolution state}

\begin{definition}[Adaptive multiresolution state]
\label{def:adaptive_state}
For $\Lambda\in\mathbb T_{\mathrm{adm}}$ and
$\mathbf c_{\Lambda}\in\mathbb R^{N_{\Lambda}}$, the associated adaptive
multiresolution state is
\begin{align}
\mathfrak V
\left(
\Lambda,
\mathbf c_{\Lambda}
\right)
=
\left(
\Lambda,
V_{\Lambda},
\Phi_{\Lambda},
\mathcal T_{\Lambda},
\mathbf c_{\Lambda}
\right).
\label{eq:adaptive_state}
\end{align}
The function represented by the state is
\begin{align}
u_{\mathfrak V}
=
\sum_{\lambda\in\Lambda}
c_{\lambda}\psi_{\lambda}
\in
V_{\Lambda}.
\label{eq:state_function}
\end{align}
\end{definition}

Since $V_{\Lambda}$, $\Phi_{\Lambda}$, and $\mathcal T_{\Lambda}$ are
determined uniquely by $\Lambda$, the independent variables of an adaptive
state are the admissible tree $\Lambda$ and its coefficient vector
$\mathbf c_{\Lambda}$. We therefore define the adaptive state space by
\begin{align}
\mathbb A
=
\left\{
\mathfrak V
\left(
\Lambda,
\mathbf c_{\Lambda}
\right)
:
\Lambda\in\mathbb T_{\mathrm{adm}},
\;
\mathbf c_{\Lambda}\in\mathbb R^{N_{\Lambda}}
\right\}.
\label{eq:adaptive_state_set}
\end{align}
The state space $\mathbb A$ is not a linear space; rather, it is a union of
finite-dimensional state sets indexed by the discrete tree configurations
$\Lambda\in\mathbb T_{\mathrm{adm}}$.
Nevertheless, every represented function $u_{\mathfrak V}$ belongs to the
common ambient Hilbert space $H$. This fixed ambient realization allows states
associated with different adaptive trees to be compared without identifying
their coefficient spaces.

\subsection{Transfer between adaptive states}

To compare states associated with different adaptive trees, we introduce
uniformly stable projections onto the active approximation spaces. For each
$\Gamma\in\mathbb T_{\mathrm{adm}}$, let
\begin{align}
P_{\Gamma}
:
H
\longrightarrow
V_{\Gamma}
\label{eq:projection_Gamma}
\end{align}
be a bounded linear projector satisfying the reproduction property
\begin{align}
P_{\Gamma}v
=
v,
\qquad
v\in V_{\Gamma}.
\label{eq:projection_reproduction}
\end{align}
We assume that these projectors are uniformly stable, namely,
\begin{align}
\sup_{\Gamma\in\mathbb T_{\mathrm{adm}}}
\left\|
P_{\Gamma}
\right\|_{H\rightarrow H}
\leq
C_{Q}
<
\infty.
\label{eq:projection_uniform_stability}
\end{align}
The $H$-orthogonal projection is the canonical example, for which
$C_{Q}=1$.

For $\Lambda,\Gamma\in\mathbb T_{\mathrm{adm}}$, the transfer operator from
$V_{\Lambda}$ to $V_{\Gamma}$ is defined by
\begin{align}
\mathcal Q_{\Gamma\leftarrow\Lambda}
=
P_{\Gamma}\big|_{V_{\Lambda}}
:
V_{\Lambda}
\longrightarrow
V_{\Gamma}.
\label{eq:transfer_operator}
\end{align}
The uniform projector bound immediately yields
\begin{align}
\left\|
\mathcal Q_{\Gamma\leftarrow\Lambda}v
\right\|_{H}
\leq
C_{Q}
\|v\|_{H},
\qquad
v\in V_{\Lambda},
\label{eq:transfer_stability}
\end{align}
with a constant independent of the admissible trees.

If $\Lambda\subseteq\Gamma$, then
$V_{\Lambda}\subseteq V_{\Gamma}$ and the reproduction property
\eqref{eq:projection_reproduction} gives
\begin{align}
\mathcal Q_{\Gamma\leftarrow\Lambda}v
=
v,
\qquad
v\in V_{\Lambda}.
\label{eq:nested_injection}
\end{align}
Hence nested refinement introduces no transfer error, whereas transfer
between non-nested adaptive spaces remains uniformly stable in the ambient
$H$ norm.

\section{Intrinsic Adaptive Multiresolution Diffusion Operators}
\label{sec:intrinsic_operator}

Throughout this section, let
$\mathfrak V=\mathfrak V(\Lambda,\mathbf c_{\Lambda})\in\mathbb A$
be an admissible adaptive state in the sense of
Definition~\ref{def:adaptive_state}. The interaction weights associated with
$\mathfrak V$ are evaluated at this state and are held fixed when the
corresponding operator acts on a coefficient vector.
Thus, for each fixed state the operator defined below is linear. For a fixed
tree $\Lambda$, however, the resulting operator family may depend nonlinearly
on the coefficient component $\mathbf c_{\Lambda}$ of the adaptive state.

\subsection{Adaptive interaction geometry}

For a fixed admissible tree $\Lambda$, let
\begin{align}
\mathcal E_{\Lambda}
\subset
\left\{
(\lambda,\mu)\in\Lambda\times\Lambda
:
\lambda\neq\mu
\right\}
\label{eq:interaction_edges}
\end{align}
be a symmetric interaction relation.
An edge may connect basis functions
that are related by one or more of the following:

\begin{enumerate}
\item parent--child adjacency;
\item spatial overlap;
\item nearest-neighbour support;
\item cross-scale interaction;
\item common-parent or sibling interaction.
\end{enumerate}

We denote the indicator of the interaction relation by
\begin{align}
\chi_{\lambda\mu}
=
\begin{cases}
1,
&
(\lambda,\mu)\in\mathcal E_{\Lambda},
\\
0,
&
(\lambda,\mu)\notin\mathcal E_{\Lambda}.
\end{cases}
\label{eq:interaction_indicator}
\end{align}
Symmetry of $\mathcal E_{\Lambda}$ implies
$\chi_{\lambda\mu}=\chi_{\mu\lambda}$.

For $\lambda,\mu\in\Lambda$, define the symmetric weight
\begin{align}
\omega_{\lambda\mu}
\left(
\mathfrak V
\right)
=
\omega_{\mu\lambda}
\left(
\mathfrak V
\right)
\geq 0,
\label{eq:symmetric_weights}
\end{align}
with
\begin{align}
\omega_{\lambda\lambda}
\left(
\mathfrak V
\right)
=
0.
\label{eq:zero_self_weight}
\end{align}

When the weights depend on the coefficient component of the state, we write
$c_{\lambda}^{\mathfrak V}$ for the $\lambda$th component of
$\mathbf c_{\Lambda}$ in \eqref{eq:adaptive_state}.

A state-dependent construction used in the numerical realization is
\begin{align}
\omega_{\lambda\mu}
\left(
\mathfrak V
\right)
=
\chi_{\lambda\mu}
\,
\kappa_{\mathrm x}
\left(
d_{\mathrm x}(\lambda,\mu)
\right)
\,
\kappa_{\ell}
\left(
|\ell_{\lambda}-\ell_{\mu}|
\right)
\,
\kappa_{\mathrm c}
\left(
|c_{\lambda}^{\mathfrak V}-c_{\mu}^{\mathfrak V}|
\right)
\,
\kappa_{\mathrm r}
\left(
\lambda,\mu
\right),
\label{eq:weight_construction}
\end{align}
where the refinement-decay factor is
\begin{align}
\kappa_{\mathrm r}
\left(
\lambda,\mu
\right)
=
\begin{cases}
2^{-\gamma_{\mathrm r}
\max\{\ell_{\lambda},\ell_{\mu}\}},
&
\mu=\operatorname{par}(\lambda)
\ \text{or}\
\lambda=\operatorname{par}(\mu),
\\[1mm]
1,
&
\text{otherwise},
\end{cases}
\qquad
\gamma_{\mathrm r}\geq0.
\label{eq:refinement_decay_kernel}
\end{align}
The factor $\kappa_{\mathrm r}$ suppresses newly introduced cross-level
parent--child couplings as the refinement level increases.  Setting
$\gamma_{\mathrm r}=0$ recovers the construction without absolute-level
decay.

For coefficient-sensitive diffusion, one may choose, for $\sigma>0$,
\begin{align}
\kappa_{\mathrm c}(r)
=
\frac{1}{1+(r/\sigma)^{2}}
\label{eq:coefficient_kernel}
\end{align}
or
\begin{align}
\kappa_{\mathrm c}(r)
=
\exp
\left(
-\left(\frac{r}{\sigma}\right)^{2}
\right).
\label{eq:gaussian_coefficient_kernel}
\end{align}

For every fixed state $\mathfrak V$, the weights
$\omega_{\lambda\mu}(\mathfrak V)$ are fixed and the resulting
coefficient-space operator is linear. For a fixed tree $\Lambda$, the
dependence of $L_{\mathfrak V}$ on the coefficient component
$\mathbf c_{\Lambda}$ may nevertheless be nonlinear through the weights.
Thus the state dependence of the operator must be distinguished from its
linear action at a fixed adaptive state. When the weights depend only on the
tree and basis geometry, the coefficient dependence is absent.

\subsection{Intrinsic Dirichlet form}

\begin{definition}[Adaptive interaction form]
\label{def:adaptive_form}
For a fixed adaptive state $\mathfrak V$ and coefficient vectors
$\mathbf x,\mathbf y\in\mathbb R^{N_{\Lambda}}$, define
\begin{align}
a_{\mathfrak V}
\left(
\mathbf x,
\mathbf y
\right)
=
\frac{1}{2}
\sum_{\lambda,\mu\in\Lambda}
\omega_{\lambda\mu}
\left(
\mathfrak V
\right)
\left(
x_{\lambda}-x_{\mu}
\right)
\left(
y_{\lambda}-y_{\mu}
\right).
\label{eq:adaptive_bilinear_form}
\end{align}
\end{definition}

For fixed $\mathfrak V$, the weights are fixed and
$a_{\mathfrak V}$ is therefore a symmetric bilinear form on
$\mathbb R^{N_{\Lambda}}$. Its associated interaction energy is
\begin{align}
\mathcal D_{\mathfrak V}
\left(
\mathbf x
\right)
=
\frac{1}{2}
a_{\mathfrak V}
\left(
\mathbf x,
\mathbf x
\right)
=
\frac{1}{4}
\sum_{\lambda,\mu\in\Lambda}
\omega_{\lambda\mu}
\left(
\mathfrak V
\right)
\left|
x_{\lambda}-x_{\mu}
\right|^{2}.
\label{eq:intrinsic_Dirichlet_energy}
\end{align}
The nonnegativity of the weights implies
\begin{align}
\mathcal D_{\mathfrak V}
\left(
\mathbf x
\right)
\geq
0.
\label{eq:Dirichlet_nonnegative}
\end{align}

The interaction energy of the adaptive state itself is obtained by evaluating
\eqref{eq:intrinsic_Dirichlet_energy} at its coefficient vector
$\mathbf c_{\Lambda}$.

\subsection{Intrinsic diffusion operator}

\begin{definition}[Intrinsic coefficient-space diffusion operator]
\label{def:intrinsic_operator}
For a fixed adaptive state $\mathfrak V$, define
\begin{align}
L_{\mathfrak V}
:
\mathbb R^{N_{\Lambda}}
\longrightarrow
\mathbb R^{N_{\Lambda}}
\label{eq:coefficient_operator_mapping}
\end{align}
componentwise by
\begin{align}
\left(
L_{\mathfrak V}\mathbf x
\right)_{\lambda}
=
\sum_{\mu\in\Lambda}
\omega_{\lambda\mu}
\left(
\mathfrak V
\right)
\left(
x_{\lambda}-x_{\mu}
\right).
\label{eq:coefficient_operator}
\end{align}
\end{definition}

Let
\begin{align}
W_{\mathfrak V}
&=
\left[
\omega_{\lambda\mu}
\left(
\mathfrak V
\right)
\right]_{\lambda,\mu\in\Lambda},
\label{eq:weight_matrix}
\\
D_{\mathfrak V}
&=
\operatorname{diag}
\left(
\sum_{\mu\in\Lambda}
\omega_{\lambda\mu}
\left(
\mathfrak V
\right)
\right)_{\lambda\in\Lambda}.
\label{eq:degree_matrix}
\end{align}
Then
\begin{align}
L_{\mathfrak V}
=
D_{\mathfrak V}
-
W_{\mathfrak V},
\label{eq:matrix_L}
\end{align}
so $L_{\mathfrak V}$ is the weighted graph Laplacian associated with the
adaptive interaction geometry.

To obtain the corresponding operator on $V_{\Lambda}\subset H$, introduce
the Gram matrix
\begin{align}
G_{\Lambda}
=
\left[
\left\langle
\psi_{\lambda},
\psi_{\mu}
\right\rangle_{H}
\right]_{\lambda,\mu\in\Lambda}.
\label{eq:Gram_matrix}
\end{align}
By the Riesz-basis bounds in \eqref{eq:Riesz_bounds},
$G_{\Lambda}$ is symmetric positive definite, with spectral bounds uniform
over the admissible trees.

For $u_{\Lambda}\in V_{\Lambda}$, let
$\mathbf x_{\Lambda}\in\mathbb R^{N_{\Lambda}}$ denote its coefficient
vector with respect to $\Phi_{\Lambda}$. We define the intrinsic
function-space diffusion operator by
\begin{align}
\mathcal L_{\mathfrak V}u_{\Lambda}
=
\sum_{\lambda\in\Lambda}
\left(
G_{\Lambda}^{-1}
L_{\mathfrak V}
\mathbf x_{\Lambda}
\right)_{\lambda}
\psi_{\lambda}.
\label{eq:function_operator}
\end{align}
Equivalently, for every $v_{\Lambda}\in V_{\Lambda}$ with coefficient vector
$\mathbf y_{\Lambda}$,
\begin{align}
\left\langle
\mathcal L_{\mathfrak V}u_{\Lambda},
v_{\Lambda}
\right\rangle_{H}
=
a_{\mathfrak V}
\left(
\mathbf x_{\Lambda},
\mathbf y_{\Lambda}
\right).
\label{eq:function_operator_variational}
\end{align}

Thus, the intrinsic operator construction defines the state-indexed
assignment
\begin{align}
\mathfrak V
=
\mathfrak V(\Lambda,\mathbf c_{\Lambda})
\longmapsto
\mathcal L_{\mathfrak V}
\in
\mathcal B(V_{\Lambda}),
\label{eq:state_to_operator}
\end{align}
where $\mathcal B(V_{\Lambda})$ denotes the space of bounded linear operators
on $V_{\Lambda}$. Since the active space $V_{\Lambda}$ depends on the tree
component of the adaptive state, the operators $\mathcal L_{\mathfrak V}$
do not belong, in general, to a single fixed operator space. Equation
\eqref{eq:state_to_operator} should therefore be understood as an
operator family indexed by the adaptive state. For every fixed
$\mathfrak V$, the operator $\mathcal L_{\mathfrak V}$ is linear,
self-adjoint, and positive semidefinite on $V_{\Lambda}$ with respect to the
$H$ inner product. For a fixed tree $\Lambda$, however, the dependence of
$\mathcal L_{\mathfrak V}$ on the coefficient component
$\mathbf c_{\Lambda}$ may be nonlinear through the state-dependent
interaction weights.

\subsection{Structural assumptions on the adaptive operator family}

The symmetry, positive semidefiniteness, and annihilation of uniform
coefficient vectors by $L_{\mathfrak V}$ follow directly from
\eqref{eq:symmetric_weights}--\eqref{eq:zero_self_weight} and therefore need
not be imposed independently. To control the adaptive operator family
uniformly, we make the following additional assumptions.

\paragraph{Uniformly bounded interactions.}
There exists $\omega_{\ast}>0$, independent of the adaptive state, such that
\begin{align}
0
\leq
\omega_{\lambda\mu}
\left(
\mathfrak V
\right)
\leq
\omega_{\ast}
\label{eq:uniform_weight_bound}
\end{align}
for every $\mathfrak V\in\mathbb A$ and every
$\lambda,\mu\in\Lambda$.

\paragraph{Uniform interaction degree.}
There exists an integer $d_{\ast}<\infty$, independent of the adaptive state,
such that
\begin{align}
\#\left\{
\mu\in\Lambda
:
\omega_{\lambda\mu}
\left(
\mathfrak V
\right)
>
0
\right\}
\leq
d_{\ast},
\qquad
\lambda\in\Lambda.
\label{eq:uniform_interaction_degree}
\end{align}

Assumption~\eqref{eq:uniform_interaction_degree} restricts the admissible
interaction geometry to uniformly local graph constructions.  In particular,
an unrestricted all-to-all coupling of overlapping basis functions across
arbitrarily many levels is excluded unless it can be shown to satisfy the same
uniform degree bound.
Thus each active basis function interacts with at most $d_{\ast}$ other
active functions, uniformly over the admissible state space. In particular,
the associated weighted graph Laplacians have uniformly bounded row
sparsity.

\paragraph{Refinement consistency.}
Let $\Lambda\subseteq\Gamma$, and write
$\mathfrak V_{\Lambda}
=
\mathfrak V(\Lambda,\mathbf c_{\Lambda})$
and
$\mathfrak V_{\Gamma}
=
\mathfrak V(\Gamma,\mathbf c_{\Gamma})$.
We call $\mathfrak V_{\Gamma}$ a compatible nested refinement of
$\mathfrak V_{\Lambda}$ if
\begin{align}
u_{\mathfrak V_{\Gamma}}
=
\mathcal Q_{\Gamma\leftarrow\Lambda}
u_{\mathfrak V_{\Lambda}}.
\label{eq:compatible_refinement}
\end{align}
For such a pair, define the refinement-commutator defect by
\begin{align}
\varepsilon_{\mathrm{rc}}
\left(
\mathfrak V_{\Lambda},
\mathfrak V_{\Gamma}
\right)
=
\sup_{
\substack{
u\in V_{\Lambda}\\
u\neq0
}
}
\frac{
\left\|
\mathcal L_{\mathfrak V_{\Gamma}}
\mathcal Q_{\Gamma\leftarrow\Lambda}u
-
\mathcal Q_{\Gamma\leftarrow\Lambda}
\mathcal L_{\mathfrak V_{\Lambda}}u
\right\|_{H}
}{
\|u\|_{H}
}.
\label{eq:refinement_commutator_defect}
\end{align}
The operator family is called refinement consistent along a nested sequence
of compatible states if
\begin{align}
\varepsilon_{\mathrm{rc}}
\left(
\mathfrak V_{\Lambda_j},
\mathfrak V_{\Lambda_{j+1}}
\right)
\longrightarrow
0
\qquad
\text{as}
\qquad
j\longrightarrow\infty.
\label{eq:refinement_consistency_limit}
\end{align}

\subsection{Elementary operator properties}

\begin{proposition}
\label{prop:operator_properties}
Let $\mathfrak V\in\mathbb A$ be fixed and assume
\eqref{eq:symmetric_weights}--\eqref{eq:zero_self_weight}. Then
$L_{\mathfrak V}$ is symmetric and positive semidefinite. Moreover, for all
$\mathbf x,\mathbf y\in\mathbb R^{N_{\Lambda}}$,
\begin{align}
\mathbf x^{\mathrm T}
L_{\mathfrak V}
\mathbf y
=
a_{\mathfrak V}
\left(
\mathbf x,
\mathbf y
\right),
\label{eq:operator_form_identity}
\end{align}
and, in particular,
\begin{align}
\mathbf x^{\mathrm T}
L_{\mathfrak V}
\mathbf x
=
\frac{1}{2}
\sum_{\lambda,\mu\in\Lambda}
\omega_{\lambda\mu}
\left(
\mathfrak V
\right)
\left|
x_{\lambda}-x_{\mu}
\right|^{2}
\geq
0.
\label{eq:quadratic_identity}
\end{align}
The uniform coefficient vector belongs to the kernel:
\begin{align}
L_{\mathfrak V}\mathbf 1
=
\mathbf 0.
\label{eq:operator_null_constant}
\end{align}
The function-space operator $\mathcal L_{\mathfrak V}$ defined in
\eqref{eq:function_operator} is therefore self-adjoint and positive
semidefinite on $V_{\Lambda}$ with respect to the $H$ inner product.
\end{proposition}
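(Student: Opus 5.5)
The plan is to work entirely from the matrix representation $L_{\mathfrak V}=D_{\mathfrak V}-W_{\mathfrak V}$ in \eqref{eq:matrix_L}, using only the symmetry and nonnegativity of the weights \eqref{eq:symmetric_weights} and the vanishing self-weights \eqref{eq:zero_self_weight}. Symmetry of $L_{\mathfrak V}$ is immediate: $D_{\mathfrak V}$ is diagonal and $W_{\mathfrak V}$ is symmetric because $\omega_{\lambda\mu}=\omega_{\mu\lambda}$. The null-mode identity \eqref{eq:operator_null_constant} also follows at once from \eqref{eq:coefficient_operator}, since every difference $x_\lambda-x_\mu$ vanishes when $\mathbf x=\mathbf 1$.

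The central step is the bilinear identity \eqref{eq:operator_form_identity}. I will expand
\begin{align*}
\mathbf y^{\mathrm T}L_{\mathfrak V}\mathbf x
=\sum_{\lambda,\mu\in\Lambda}\omega_{\lambda\mu}\,y_\lambda(x_\lambda-x_\mu).
\end{align*}
Next I swap the summation labels $\lambda\leftrightarrow\mu$ and use $\omega_{\lambda\mu}=\omega_{\mu\lambda}$. This rewrites the same sum as $\sum_{\lambda,\mu}\omega_{\lambda\mu}\,y_\mu(x_\mu-x_\lambda)$. Averaging the two expressions gives
\begin{align*}
\tfrac12\sum_{\lambda,\mu}\omega_{\lambda\mu}(y_\lambda-y_\mu)(x_\lambda-x_\mu)=a_{\mathfrak V}(\mathbf x,\mathbf y).
\end{align*}
By symmetry of $L_{\mathfrak V}$, this equals $\mathbf x^{\mathrm T}L_{\mathfrak V}\mathbf y$. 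Setting $\mathbf y=\mathbf x$ yields \eqref{eq:quadratic_identity}, and nonnegativity of each $\omega_{\lambda\mu}$ gives positive semidefiniteness. The diagonal terms contribute nothing regardless of the self-weights, because the factor $(x_\lambda-x_\lambda)$ vanishes; \eqref{eq:zero_self_weight} merely keeps $D_{\mathfrak V}$ consistent with the graph Laplacian convention.

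For the function-space statement I will use the Gram-matrix representation. Write $u=\sum x_\lambda\psi_\lambda$ and $v=\sum y_\lambda\psi_\lambda$. By \eqref{eq:function_operator}, $\mathcal L_{\mathfrak V}u$ has coefficient vector $G_\Lambda^{-1}L_{\mathfrak V}\mathbf x$, so
\begin{align*}
\langle\mathcal L_{\mathfrak V}u,v\rangle_H
=\mathbf y^{\mathrm T}G_\Lambda\,G_\Lambda^{-1}L_{\mathfrak V}\mathbf x
=\mathbf y^{\mathrm T}L_{\mathfrak V}\mathbf x
=a_{\mathfrak V}(\mathbf x,\mathbf y).
\end{align*}
This verifies \eqref{eq:function_operator_variational}. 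Symmetry of $a_{\mathfrak V}$ then gives $\langle\mathcal L_{\mathfrak V}u,v\rangle_H=\langle u,\mathcal L_{\mathfrak V}v\rangle_H$, and \eqref{eq:quadratic_identity} gives $\langle\mathcal L_{\mathfrak V}u,u\rangle_H\ge0$.

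The only point needing care is that $G_\Lambda$ is symmetric positive definite, hence invertible, so that $\mathcal L_{\mathfrak V}$ is well defined. This follows from the Riesz bounds \eqref{eq:active_Riesz_bounds}, since $\mathbf x^{\mathrm T}G_\Lambda\mathbf x=\|u\|_H^2\ge A_\Psi\|\mathbf x\|^2$. The coefficient map is a bijection by uniqueness of the representation \eqref{eq:adaptive_expansion}, so statements for all coefficient vectors transfer to all elements of $V_\Lambda$. There is no real obstacle; the relabeling step in the bilinear identity is the only nontrivial manipulation.
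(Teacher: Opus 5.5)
Your proof is correct and follows essentially the same route as the paper. Symmetry comes from $\omega_{\lambda\mu}=\omega_{\mu\lambda}$, and the bilinear identity comes from expanding and exchanging summation indices. Nonnegativity and the null mode are read off directly, and the function-space claims are transferred through \eqref{eq:function_operator_variational}. You also explicitly verify \eqref{eq:function_operator_variational} via $G_\Lambda G_\Lambda^{-1}=I$ and the invertibility of $G_\Lambda$, which the paper takes as given.
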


\begin{proof}
Symmetry follows from \eqref{eq:symmetric_weights}. Expanding
$\mathbf x^{\mathrm T}L_{\mathfrak V}\mathbf y$ and exchanging the indices
yields \eqref{eq:operator_form_identity}. Taking
$\mathbf y=\mathbf x$ gives \eqref{eq:quadratic_identity}, and
nonnegativity follows from the nonnegative interaction weights. Equation
\eqref{eq:operator_null_constant} follows because every coefficient
difference vanishes for $\mathbf 1$. Finally,
\eqref{eq:function_operator_variational} transfers symmetry and
nonnegativity of the interaction form to the $H$ inner product on
$V_{\Lambda}$.
\end{proof}

Under \eqref{eq:uniform_weight_bound} and
\eqref{eq:uniform_interaction_degree}, the coefficient-space operators are
uniformly bounded. In particular,
\begin{align}
\left\|
L_{\mathfrak V}
\right\|_{\ell^{2}(\Lambda)\rightarrow\ell^{2}(\Lambda)}
\leq
2d_{\ast}\omega_{\ast},
\qquad
\mathfrak V\in\mathbb A.
\label{eq:uniform_operator_bound}
\end{align}

Together with the lower Riesz bound in \eqref{eq:Riesz_bounds}, this yields
the corresponding uniform estimate for the function-space operators,
\begin{align}
\left\|
\mathcal L_{\mathfrak V}
\right\|_{H\rightarrow H}
\leq
\frac{2d_{\ast}\omega_{\ast}}{A_{\Psi}},
\qquad
\mathfrak V\in\mathbb A.
\label{eq:uniform_function_operator_bound}
\end{align}
Here the norm is understood on the active space
$V_{\Lambda}\subset H$ associated with the state $\mathfrak V$.

Finally, define the positive-weight interaction graph by retaining precisely
those pairs for which
$\omega_{\lambda\mu}(\mathfrak V)>0$. If this graph is connected, then
\begin{align}
\ker
L_{\mathfrak V}
=
\operatorname{span}
\left\{
\mathbf 1
\right\}.
\label{eq:connected_kernel}
\end{align}

\section{Variational Formulation}
\label{sec:variational}

\subsection{Fidelity functional}

Let
\begin{align}
\mathcal F
:
H
\longrightarrow
(-\infty,+\infty]
\label{eq:general_fidelity_mapping}
\end{align}
be a proper, lower semicontinuous functional that is bounded from below.
The abstract variational formulation developed below requires only these
properties.

A principal example is the quadratic fidelity functional arising in inverse
problems. Let
\begin{align}
A
:
H
\longrightarrow
Y
\label{eq:forward_operator}
\end{align}
be a bounded linear operator into a Hilbert data space $Y$. For an exact
state $u^{\dagger}\in H$, suppose that the measured data satisfy
\begin{align}
f^{\delta}
=
Au^{\dagger}
+
\xi,
\qquad
\|\xi\|_{Y}
\leq
\delta,
\label{eq:noisy_data}
\end{align}
with noise $\xi\in Y$. The corresponding fidelity functional is
\begin{align}
\mathcal F
\left(
u
\right)
=
\frac{1}{2}
\left\|
Au-f^{\delta}
\right\|_{Y}^{2}.
\label{eq:fidelity_functional}
\end{align}
Since $A$ is bounded, \eqref{eq:fidelity_functional} is continuous,
nonnegative, and convex on $H$.

\subsection{Adaptive regularization energy}

Let
$\mathfrak V=\mathfrak V(\Lambda,\mathbf c_{\Lambda})\in\mathbb A$,
and let $u_{\mathfrak V}\in V_{\Lambda}$ denote the represented function
defined in \eqref{eq:state_function}. For a parameter $\alpha>0$, define the
intrinsic diffusion regularizer by
\begin{align}
\mathcal R_{\mathrm{diff}}
\left(
\mathfrak V
\right)
=
\alpha
\mathcal D_{\mathfrak V}
\left(
\mathbf c_{\Lambda}
\right).
\label{eq:diffusion_regularizer}
\end{align}
By \eqref{eq:intrinsic_Dirichlet_energy} and
\eqref{eq:operator_form_identity}, this term is nonnegative.

For the variational analysis, we assume that, for every fixed admissible tree
$\Lambda$ and every $\lambda,\mu\in\Lambda$, the mapping
\begin{align}
\mathbf c_{\Lambda}
\longmapsto
\omega_{\lambda\mu}
\left(
\mathfrak V(\Lambda,\mathbf c_{\Lambda})
\right)
\label{eq:weight_coefficient_continuity}
\end{align}
is continuous. Since $\Lambda$ is finite, this assumption implies that
$\mathcal R_{\mathrm{diff}}$ is continuous as a function of
$\mathbf c_{\Lambda}\in\mathbb R^{N_{\Lambda}}$ for every fixed admissible
tree. The coefficient-sensitive kernels in
\eqref{eq:coefficient_kernel} and
\eqref{eq:gaussian_coefficient_kernel} satisfy this assumption.

To penalize the size of the adaptive representation and, through suitable
level-dependent costs, its refinement depth, introduce the tree-complexity
functional
\begin{align}
\mathcal C
\left(
\Lambda
\right)
=
\sum_{\lambda\in\Lambda}
\rho_{\ell_{\lambda}},
\label{eq:tree_complexity}
\end{align}
where the level costs satisfy
\begin{align}
\rho_{\ell}
\geq
\rho_{\ast}
>
0,
\qquad
\ell\geq0,
\label{eq:positive_level_cost}
\end{align}
for a constant $\rho_{\ast}$ independent of the level.
A useful choice is
\begin{align}
\rho_{\ell}
=
1+\gamma\ell,
\qquad
\gamma\geq0,
\label{eq:linear_level_cost}
\end{align}
which penalizes the number of active indices and, when $\gamma>0$, introduces
an additional cost for increasingly fine resolution levels.

A coefficient sparsity term may additionally be included. For $\beta\geq0$,
let
\begin{align}
\mathcal R_{\mathrm{sp}}
\left(
\mathbf c_{\Lambda}
\right)
=
\beta
\sum_{\lambda\in\Lambda}
\nu_{\lambda}
\,
\varphi
\left(
|c_{\lambda}|
\right),
\label{eq:sparsity_regularizer}
\end{align}
where the coefficient weights satisfy
\begin{align}
\nu_{\lambda}
\geq
\nu_{\ast}
>
0
\label{eq:positive_sparsity_weights}
\end{align}
uniformly over the ambient index set, and
$\varphi:[0,\infty)\rightarrow[0,\infty)$ is convex, lower
semicontinuous, coercive, and satisfies $\varphi(0)=0$. Typical choices are
\begin{align}
\varphi(r)
&=
r,
\label{eq:l1_phi}
\\
\varphi_{\epsilon}(r)
&=
\sqrt{r^{2}+\epsilon^{2}}-\epsilon,
\qquad
\epsilon>0.
\label{eq:smooth_l1_phi}
\end{align}

The total adaptive energy is defined directly on the adaptive state space.
For $\alpha>0$, $\beta\geq0$, and $\tau>0$, let
\begin{align}
\boxed{
\begin{aligned}
\mathcal E
\left(
\mathfrak V
\right)
=
&
\mathcal F
\left(
u_{\mathfrak V}
\right)
\\
&
+
\alpha
\mathcal D_{\mathfrak V}
\left(
\mathbf c_{\Lambda}
\right)
\\
&
+
\beta
\sum_{\lambda\in\Lambda}
\nu_{\lambda}
\,
\varphi
\left(
|c_{\lambda}|
\right)
\\
&
+
\tau
\mathcal C
\left(
\Lambda
\right).
\end{aligned}
}
\label{eq:total_adaptive_energy}
\end{align}
The four contributions in \eqref{eq:total_adaptive_energy} represent,
respectively, the fidelity term, intrinsic adaptive diffusion, coefficient
sparsity, and adaptive-tree complexity.

Because $u_{\mathfrak V}$, $V_{\Lambda}$, $\Phi_{\Lambda}$, and
$\mathcal T_{\Lambda}$ are determined by the adaptive state, no additional
compatibility constraint between the represented function and the
coefficient vector is required.
The functional
$\mathcal E:\mathbb A\rightarrow(-\infty,+\infty]$ is defined on the
adaptive state space $\mathbb A$, which is not a linear space.
For a fixed tree $\Lambda$, its
dependence on the coefficient vector may be nonconvex when the interaction
weights depend nonlinearly on $\mathbf c_{\Lambda}$. Across different trees,
the discrete change of approximation space gives the variational problem an
additional combinatorial character.

When the interaction weights depend on $\mathbf c_{\Lambda}$, the quantity
$\mathcal D_{\mathfrak V}(\mathbf c_{\Lambda})$ must be distinguished from
the quadratic energy associated with a frozen adaptive state. In particular,
the full variation of
$\mathcal D_{\mathfrak V}(\mathbf c_{\Lambda})$ with respect to the
coefficient component generally contains contributions arising from the
coefficient dependence of the weights. Consequently,
$L_{\mathfrak V}\mathbf c_{\Lambda}$ represents the diffusion action for a
fixed state, but it is not, in general, the complete gradient of the
state-dependent regularization energy. The discrete variational evolution
introduced below is therefore formulated directly in terms of
$\mathcal E$ rather than by identifying its full gradient with
$L_{\mathfrak V}\mathbf c_{\Lambda}$.

\section{Coupled Variational Evolution}
\label{sec:discrete_evolution}

\subsection{Motivation for a time-discrete formulation}

For a fixed admissible tree, the coefficient variables belong to a
finite-dimensional vector space and may vary continuously. Refinement and
coarsening, by contrast, modify the discrete tree configuration. We therefore
do not introduce a formal time derivative of the tree variable. Instead, the
coupled evolution is defined through successive variational updates on the
adaptive state space.

Let
\begin{align}
0=t_{0}<t_{1}<\cdots<t_{K}=T,
\qquad
h_{n}=t_{n+1}-t_{n}>0,
\label{eq:time_partition}
\end{align}
and let
\begin{align}
\mathfrak V^{n}
=
\mathfrak V
\left(
\Lambda^{n},
\mathbf c^{n}
\right)
\in
\mathbb A
\label{eq:discrete_state}
\end{align}
denote the adaptive state at step $n$. We write
\begin{align}
u^{n}
=
u_{\mathfrak V^{n}}
\in
V_{\Lambda^{n}}
\label{eq:discrete_represented_function}
\end{align}
for its represented function.

\subsection{Admissible local tree modifications}

For each $\Lambda\in\mathbb T_{\mathrm{adm}}$, let
\begin{align}
\mathcal N
\left(
\Lambda
\right)
\subset
\mathbb T_{\mathrm{adm}}
\label{eq:tree_neighbourhood}
\end{align}
be a finite set of admissible trees reachable from $\Lambda$ by a prescribed
finite number of local refinement or coarsening operations. We require
\begin{align}
\Lambda
\in
\mathcal N
\left(
\Lambda
\right),
\label{eq:tree_neighbourhood_identity}
\end{align}
so that retaining the current tree is always an admissible choice.

For an integer $m\geq1$, a typical neighbourhood is
\begin{align}
\mathcal N_{m}
\left(
\Lambda
\right)
=
\left\{
\Gamma\in\mathbb T_{\mathrm{adm}}
:
\Gamma
\text{ is reachable from }
\Lambda
\text{ by at most }
m
\text{ admissible local tree moves}
\right\}.
\label{eq:m_tree_neighbourhood}
\end{align}
We assume that the underlying multiresolution hierarchy has finite branching,
so that $\mathcal N_{m}(\Lambda)$ is finite for every finite $\Lambda$ and
finite $m$.

\subsection{Joint variational time step}

For $\Gamma\in\mathcal N(\Lambda^{n})$ and $v\in V_{\Gamma}$, let
$\mathbf c_{\Gamma}(v)$ denote the coefficient vector of $v$ in
$\Phi_{\Gamma}$ and define the corresponding candidate adaptive state by
\begin{align}
\mathfrak V_{\Gamma}(v)
=
\mathfrak V
\left(
\Gamma,
\mathbf c_{\Gamma}(v)
\right).
\label{eq:candidate_adaptive_state}
\end{align}

Given $\mathfrak V^{n}$, select a minimizing tree--function pair by
\begin{align}
\boxed{
\left(
\Lambda^{n+1},
u^{n+1}
\right)
\in
\operatorname*{argmin}_{
\substack{
\Gamma\in\mathcal N(\Lambda^{n})\\
v\in V_{\Gamma}
}
}
\left\{
\frac{1}{2h_{n}}
\left\|
v-
\mathcal Q_{\Gamma\leftarrow\Lambda^{n}}
u^{n}
\right\|_{H}^{2}
+
\mathcal E
\left(
\mathfrak V_{\Gamma}(v)
\right)
\right\}.
}
\label{eq:joint_minimizing_movement}
\end{align}
The corresponding updated adaptive state is
\begin{align}
\mathfrak V^{n+1}
=
\mathfrak V_{\Lambda^{n+1}}
\left(
u^{n+1}
\right).
\label{eq:new_state_from_minimizer}
\end{align}

Equation~\eqref{eq:joint_minimizing_movement} defines the coupled
time-discrete variational update of the adaptive system. The first term penalizes the
change of the represented function after transfer to a candidate adaptive
space, while the second term evaluates the complete state-dependent energy
defined in \eqref{eq:total_adaptive_energy}. Thus the represented function
and the adaptive tree are selected simultaneously.

\subsection{Split implementation}

For practical computation, the joint minimization in
\eqref{eq:joint_minimizing_movement} may be approximated by a coefficient
update followed by a tree-adaptation step.

First, keeping the current tree fixed, compute
\begin{align}
\widetilde u^{n+1}
\in
\operatorname*{argmin}_{v\in V_{\Lambda^{n}}}
\left\{
\frac{1}{2h_{n}}
\left\|
v-u^{n}
\right\|_{H}^{2}
+
\mathcal E
\left(
\mathfrak V_{\Lambda^{n}}(v)
\right)
\right\}.
\label{eq:coefficient_step}
\end{align}

Next, for each candidate tree
$\Gamma\in\mathcal N(\Lambda^{n})$, define
\begin{align}
v_{\Gamma}^{n+1}
=
\mathcal Q_{\Gamma\leftarrow\Lambda^{n}}
\widetilde u^{n+1},
\label{eq:candidate_transferred_function}
\end{align}
and let
$\mathfrak V_{\Gamma}(v_{\Gamma}^{n+1})$ be the associated candidate state.

The new tree is then selected by
\begin{align}
\Lambda^{n+1}
\in
\operatorname*{argmin}_{\Gamma\in\mathcal N(\Lambda^{n})}
\left\{
\mathcal E
\left(
\mathfrak V_{\Gamma}
\left(
v_{\Gamma}^{n+1}
\right)
\right)
+
\frac{\zeta}{2}
d_{\mathbb T}^{2}
\left(
\Gamma,\Lambda^{n}
\right)
\right\},
\label{eq:tree_step}
\end{align}
where $\zeta\geq0$ controls the cost of modifying the adaptive tree. We use
the tree-change distance
\begin{align}
d_{\mathbb T}
\left(
\Gamma,\Lambda
\right)
=
\left|
\Gamma\triangle\Lambda
\right|,
\label{eq:tree_distance}
\end{align}
with $\triangle$ denoting symmetric set difference.

In the two-dimensional Haar implementation used for the numerical
experiments, the tree-change distance is evaluated through the symmetric
difference of refined spatial cells rather than individual wavelet indices.
The parameter $\zeta$ used in the numerical calculations is calibrated
according to this cell-based convention.

Finally, set
\begin{align}
u^{n+1}
=
v_{\Lambda^{n+1}}^{n+1}
=
\mathcal Q_{\Lambda^{n+1}\leftarrow\Lambda^{n}}
\widetilde u^{n+1},
\label{eq:transfer_after_adaptation}
\end{align}
and define
\begin{align}
\mathfrak V^{n+1}
=
\mathfrak V_{\Lambda^{n+1}}
\left(
u^{n+1}
\right).
\label{eq:split_updated_state}
\end{align}

\subsection{Fixed-tree optimality and frozen-weight linearization}

For a fixed admissible tree $\Lambda$, define the reduced coefficient-space
energy
\begin{align}
\mathcal E_{\Lambda}
\left(
\mathbf c
\right)
=
\mathcal E
\left(
\mathfrak V(\Lambda,\mathbf c)
\right).
\label{eq:fixed_tree_energy}
\end{align}
Using the Gram matrix defined in \eqref{eq:Gram_matrix}, the fixed-tree
coefficient step corresponding to \eqref{eq:coefficient_step} may be written
as
\begin{align}
\widetilde{\mathbf c}^{\,n+1}
\in
\operatorname*{argmin}_{\mathbf c\in\mathbb R^{N_{\Lambda^{n}}}}
\left\{
\frac{1}{2h_{n}}
\left(
\mathbf c-\mathbf c^{n}
\right)^{\mathrm T}
G_{\Lambda^{n}}
\left(
\mathbf c-\mathbf c^{n}
\right)
+
\mathcal E_{\Lambda^{n}}
\left(
\mathbf c
\right)
\right\}.
\label{eq:coefficient_step_coordinates}
\end{align}

If $\mathcal E_{\Lambda^{n}}$ is differentiable at the minimizer, the
corresponding first-order optimality condition is
\begin{align}
\frac{1}{h_{n}}
G_{\Lambda^{n}}
\left(
\widetilde{\mathbf c}^{\,n+1}
-
\mathbf c^{n}
\right)
+
\nabla
\mathcal E_{\Lambda^{n}}
\left(
\widetilde{\mathbf c}^{\,n+1}
\right)
=
\mathbf 0.
\label{eq:exact_fixed_tree_optimality}
\end{align}
If the coefficient sparsity term is convex but nonsmooth while the remaining
fixed-tree energy is differentiable, the corresponding first-order condition
is understood as a subdifferential inclusion, with the derivative of the
sparsity term replaced by its convex subdifferential.

When the interaction weights depend on the coefficient vector, the gradient
of the intrinsic diffusion energy contains the additional weight-variation
terms discussed in Section~\ref{sec:variational}. Consequently,
$L_{\mathfrak V}\mathbf c$ alone does not represent the full gradient of the
state-dependent diffusion energy.

A computationally simpler frozen-weight approximation is obtained by
evaluating the interaction weights at the previous state
$\mathfrak V^{n}$ during the coefficient update. We denote the resulting
approximate coefficient vector by
$\widehat{\mathbf c}^{\,n+1}$, thereby distinguishing it from the exact
fixed-tree minimizer $\widetilde{\mathbf c}^{\,n+1}$ in
\eqref{eq:coefficient_step_coordinates}. For the quadratic fidelity
functional in \eqref{eq:fidelity_functional}, define
\begin{align}
A_{\Lambda}
:
\mathbb R^{N_{\Lambda}}
\longrightarrow
Y
\label{eq:restricted_forward_operator}
\end{align}
as the coefficient representation of the restriction of $A$ to
$V_{\Lambda}$, using \eqref{eq:adaptive_expansion}. Let
$A_{\Lambda}^{\ast}$ denote its adjoint with respect to the Euclidean
coefficient inner product and the inner product of $Y$.

If, in addition, the coefficient regularization is taken to be quadratic in
this frozen-weight approximation, replacing the sparsity contribution by
\begin{align}
\mathcal S_{\Lambda}
\left(
\mathbf c
\right)
=
\frac{1}{2}
\mathbf c^{\mathrm T}
R_{\Lambda}
\mathbf c,
\qquad
R_{\Lambda}
\succeq
0,
\label{eq:quadratic_coefficient_regularization}
\end{align}
then the frozen-weight first-order condition becomes
\begin{align}
\frac{1}{h_{n}}
G_{\Lambda^{n}}
\left(
\widehat{\mathbf c}^{\,n+1}
-
\mathbf c^{n}
\right)
+
A_{\Lambda^{n}}^{\ast}
\left(
A_{\Lambda^{n}}
\widehat{\mathbf c}^{\,n+1}
-
f^{\delta}
\right)
+
\alpha
L_{\mathfrak V^{n}}
\widehat{\mathbf c}^{\,n+1}
+
\beta
R_{\Lambda^{n}}
\widehat{\mathbf c}^{\,n+1}
=
\mathbf 0.
\label{eq:frozen_weight_optimality}
\end{align}
Equivalently,
\begin{align}
\Big[
G_{\Lambda^{n}}
+
h_{n}
\big(
A_{\Lambda^{n}}^{\ast}A_{\Lambda^{n}}
+
\alpha L_{\mathfrak V^{n}}
+
\beta R_{\Lambda^{n}}
\big)
\Big]
\widehat{\mathbf c}^{\,n+1}
=
G_{\Lambda^{n}}
\mathbf c^{n}
+
h_{n}
A_{\Lambda^{n}}^{\ast}
f^{\delta}.
\label{eq:implicit_linear_update}
\end{align}
Since $G_{\Lambda^{n}}$ is symmetric positive definite, while
$A_{\Lambda^{n}}^{\ast}A_{\Lambda^{n}}$,
$L_{\mathfrak V^{n}}$, and $R_{\Lambda^{n}}$ are positive semidefinite, the
matrix on the left-hand side of \eqref{eq:implicit_linear_update} is symmetric
positive definite. Hence the frozen-weight coefficient update admits a unique
solution for every $h_{n}>0$.
If the active basis is $H$-orthonormal, then
$G_{\Lambda^{n}}=I$, and \eqref{eq:implicit_linear_update} reduces to the
usual Euclidean-coordinate implicit update.

\section{Existence, Stability, and Energy Decay}
\label{sec:analysis}

The results in this section concern the exact joint time-discrete variational
scheme in \eqref{eq:joint_minimizing_movement}, unless stated otherwise.
The split formulation in Section~\ref{sec:discrete_evolution} is treated
separately below. No energy-dissipation property is assumed for the
frozen-weight linearization without an additional acceptance criterion.

\subsection{Existence of a discrete update}

\begin{theorem}[Existence of the adaptive variational step]
\label{thm:discrete_existence}
Let $\mathfrak V^{n}\in\mathbb A$ satisfy
\begin{align}
\mathcal E
\left(
\mathfrak V^{n}
\right)
<
\infty.
\label{eq:finite_current_energy}
\end{align}
Assume that $\mathcal N(\Lambda^{n})$ is finite and that, for every
$\Gamma\in\mathcal N(\Lambda^{n})$, the fixed-tree functional
\begin{align}
v
\longmapsto
\mathcal E
\left(
\mathfrak V_{\Gamma}(v)
\right)
\label{eq:fixed_tree_energy_functional}
\end{align}
is proper on $V_{\Gamma}$. Under the assumptions of
Section~\ref{sec:variational}, the minimization problem
\eqref{eq:joint_minimizing_movement} admits at least one solution.
\end{theorem}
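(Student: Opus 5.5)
Since $\mathcal N(\Lambda^{n})$ is finite, the joint problem \eqref{eq:joint_minimizing_movement} is a minimum over finitely many fixed-tree problems. I will show that each fixed-tree problem has a minimizer and then take the best tree. Fix $\Gamma\in\mathcal N(\Lambda^{n})$ and identify $V_{\Gamma}$ with $\mathbb R^{N_{\Gamma}}$ through $v\mapsto\mathbf c_{\Gamma}(v)$. This is a linear isomorphism, bounded with bounded inverse by \eqref{eq:active_Riesz_bounds}, so topological statements transfer freely. Set $w^{n}_{\Gamma}=\mathcal Q_{\Gamma\leftarrow\Lambda^{n}}u^{n}\in V_{\Gamma}$ and define
\begin{align*}
J_{\Gamma}(\mathbf c)
=
\frac{1}{2h_{n}}
\Big\|\sum_{\lambda\in\Gamma}c_{\lambda}\psi_{\lambda}-w^{n}_{\Gamma}\Big\|_{H}^{2}
+
\mathcal E\big(\mathfrak V(\Gamma,\mathbf c)\big).
\end{align*}
I then prove three properties of $J_{\Gamma}$ on the finite-dimensional space $\mathbb R^{N_{\Gamma}}$.

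\emph{Lower semicontinuity.} The quadratic proximity term is continuous. The map $\mathbf c\mapsto\mathcal F(u_{\mathfrak V(\Gamma,\mathbf c)})$ is lower semicontinuous, because it is the lower semicontinuous functional $\mathcal F$ composed with the continuous synthesis map into $H$. The diffusion term $\alpha\mathcal D_{\mathfrak V(\Gamma,\mathbf c)}(\mathbf c)$ is continuous by the continuity assumption \eqref{eq:weight_coefficient_continuity}, since it is a finite sum of continuous weights times continuous squares. The sparsity term is lower semicontinuous, because $\varphi$ is lower semicontinuous and $r\mapsto|r|$ is continuous. The complexity term $\tau\mathcal C(\Gamma)$ is a finite constant. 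Hence $J_{\Gamma}$ is lower semicontinuous.

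\emph{Properness and coercivity.} Properness is assumed in the theorem. For coercivity, let $\mathcal F\geq -M$; the remaining energy terms are nonnegative. By the lower Riesz bound and the triangle inequality,
\begin{align*}
J_{\Gamma}(\mathbf c)
\geq
\frac{1}{2h_{n}}\Big(\sqrt{A_{\Psi}}\,\|\mathbf c\|_{\ell^{2}}-\|w^{n}_{\Gamma}\|_{H}\Big)_{+}^{2}
-M,
\end{align*}
which tends to $+\infty$ as $\|\mathbf c\|\to\infty$. Here $\|w^{n}_{\Gamma}\|_{H}\leq C_{Q}\|u^{n}\|_{H}<\infty$ by \eqref{eq:transfer_stability}. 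Sublevel sets of $J_{\Gamma}$ are therefore bounded, and closed by lower semicontinuity, hence compact in $\mathbb R^{N_{\Gamma}}$.

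\emph{Existence on each tree.} Since $J_{\Gamma}$ is proper, some sublevel set $\{J_{\Gamma}\leq t\}$ with $t<\infty$ is nonempty. It is compact, so the Weierstrass theorem gives a minimizer $\mathbf c^{\ast}_{\Gamma}$ with $J_{\Gamma}(\mathbf c^{\ast}_{\Gamma})=m_{\Gamma}\in\mathbb R$. The value is finite because $\mathcal F$ is bounded below.

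\emph{Choice of tree.} The tree $\Lambda^{n}$ itself lies in $\mathcal N(\Lambda^{n})$ by \eqref{eq:tree_neighbourhood_identity}. Choosing $v=u^{n}$, and using the reproduction property \eqref{eq:nested_injection}, gives $m_{\Lambda^{n}}\leq\mathcal E(\mathfrak V^{n})<\infty$ by \eqref{eq:finite_current_energy}; this also yields properness for $\Gamma=\Lambda^{n}$ directly. Because the index set is finite, $\min_{\Gamma}m_{\Gamma}$ is attained at some $\Lambda^{n+1}$. The pair $(\Lambda^{n+1},u^{n+1})$, with $u^{n+1}=\sum_{\lambda}(\mathbf c^{\ast}_{\Lambda^{n+1}})_{\lambda}\psi_{\lambda}$, then solves \eqref{eq:joint_minimizing_movement}. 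Any trees $\Gamma$ with $m_{\Gamma}=+\infty$ would simply be discarded, but properness excludes this case anyway.

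\emph{Main obstacle.} The only delicate step is lower semicontinuity of the state-dependent diffusion term. The weights are not convex in $\mathbf c$, so convexity arguments are unavailable and one must rely on the assumed continuity \eqref{eq:weight_coefficient_continuity}. Finite-dimensionality of each $V_{\Gamma}$ is what lets mere lower semicontinuity plus coercivity suffice, with no weak-topology argument needed.
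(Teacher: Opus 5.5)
Your proof is correct and follows the same route as the paper: you establish lower semicontinuity of each fixed-tree incremental functional, obtain coercivity from the proximity term together with the lower bound on $\mathcal F$, apply a Weierstrass argument on the finite-dimensional space $V_{\Gamma}$, and then take the minimum over the finitely many candidate trees. You also make explicit two points the paper leaves implicit, namely the lower Riesz bound in the coercivity estimate and the competitor $\Gamma=\Lambda^{n}$, $v=u^{n}$ certifying a finite optimal value; the argument is otherwise the paper's.
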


\begin{proof}
Fix $\Gamma\in\mathcal N(\Lambda^{n})$. Since $V_{\Gamma}$ is finite
dimensional, the coefficient map
$v\mapsto\mathbf c_{\Gamma}(v)$ is continuous. The fidelity contribution is
lower semicontinuous, the intrinsic diffusion contribution is continuous by
\eqref{eq:weight_coefficient_continuity}, and the coefficient regularization
is lower semicontinuous. Hence
$v\mapsto\mathcal E(\mathfrak V_{\Gamma}(v))$ is lower semicontinuous on
$V_{\Gamma}$.

Moreover, $\mathcal E(\mathfrak V_{\Gamma}(\cdot))$ is bounded from below
on $V_{\Gamma}$, while the incremental $H$-norm term in
\eqref{eq:joint_minimizing_movement} is coercive on $V_{\Gamma}$. 
Therefore the complete incremental functional is proper,
lower semicontinuous, and coercive on the finite-dimensional space
$V_{\Gamma}$, and consequently attains its minimum.

Since $\mathcal N(\Lambda^{n})$ contains only finitely many candidate trees,
the smallest of the corresponding fixed-tree minima is attained. Hence
\eqref{eq:joint_minimizing_movement} admits at least one minimizing
tree--function pair.
\end{proof}

\subsection{Discrete energy inequality}

\begin{theorem}[Discrete energy dissipation]
\label{thm:energy_decay}
Let $(\Lambda^{n+1},u^{n+1})$ solve
\eqref{eq:joint_minimizing_movement}. Then
\begin{align}
\mathcal E
\left(
\mathfrak V^{n+1}
\right)
+
\frac{1}{2h_{n}}
\left\|
u^{n+1}
-
\mathcal Q_{\Lambda^{n+1}\leftarrow\Lambda^{n}}
u^{n}
\right\|_{H}^{2}
\leq
\mathcal E
\left(
\mathfrak V^{n}
\right).
\label{eq:discrete_energy_inequality}
\end{align}
\end{theorem}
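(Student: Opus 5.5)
The plan is the standard minimizing-movement comparison argument: test the minimization problem \eqref{eq:joint_minimizing_movement} with the competitor that keeps the current tree and the current function. By \eqref{eq:tree_neighbourhood_identity}, the tree $\Gamma=\Lambda^{n}$ belongs to $\mathcal N(\Lambda^{n})$. Since $u^{n}\in V_{\Lambda^{n}}$, the pair $(\Lambda^{n},u^{n})$ is admissible in the minimization.

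First, I would compute the value of the incremental functional at this competitor. Because $\Lambda^{n}\subseteq\Lambda^{n}$, the nested-injection identity \eqref{eq:nested_injection} (equivalently, the reproduction property \eqref{eq:projection_reproduction}) gives $\mathcal Q_{\Lambda^{n}\leftarrow\Lambda^{n}}u^{n}=u^{n}$. Hence the penalty term vanishes.

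It remains to identify $\mathfrak V_{\Lambda^{n}}(u^{n})$ with $\mathfrak V^{n}$. By the Riesz-basis property, the coefficient representation in $\Phi_{\Lambda^{n}}$ is unique, so $\mathbf c_{\Lambda^{n}}(u^{n})=\mathbf c^{n}$. Therefore
\begin{align*}
\mathfrak V_{\Lambda^{n}}(u^{n})=\mathfrak V(\Lambda^{n},\mathbf c^{n})=\mathfrak V^{n}.
\end{align*}
It follows that the competitor value equals $\mathcal E(\mathfrak V^{n})$.

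Second, I would use minimality. The value attained at $(\Lambda^{n+1},u^{n+1})$ is no larger than the competitor value. By \eqref{eq:new_state_from_minimizer}, the energy term at the minimizer is $\mathcal E(\mathfrak V^{n+1})$, where $\mathfrak V^{n+1}=\mathfrak V_{\Lambda^{n+1}}(u^{n+1})$. The penalty term at the minimizer is exactly $\frac{1}{2h_n}\|u^{n+1}-\mathcal Q_{\Lambda^{n+1}\leftarrow\Lambda^{n}}u^{n}\|_H^2$. Writing out the inequality between the two values yields \eqref{eq:discrete_energy_inequality} directly.

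The only delicate point is the case $\mathcal E(\mathfrak V^{n})=+\infty$. There the inequality holds trivially in the extended reals, provided the left-hand side is well defined. This is guaranteed because $\mathcal E$ is bounded below: $\mathcal F$ is bounded below and the remaining terms are nonnegative. If $\mathcal E(\mathfrak V^{n})<\infty$, as in Theorem~\ref{thm:discrete_existence}, then all quantities involved are finite and the inequality carries real content. Beyond this bookkeeping, no analytic obstacle is expected. The argument uses no convexity and no regularity of the weights, only the inclusion \eqref{eq:tree_neighbourhood_identity} and the exactness of transfer on the unchanged tree.
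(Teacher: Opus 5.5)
Your proposal is correct and follows the paper's proof. Both test \eqref{eq:joint_minimizing_movement} with the competitor $(\Lambda^{n},u^{n})$, which is admissible by \eqref{eq:tree_neighbourhood_identity}, use the reproduction property to make the incremental term vanish, identify the candidate state with $\mathfrak V^{n}$, and conclude by minimality. Your remarks on uniqueness of the coefficient representation and on the case $\mathcal E(\mathfrak V^{n})=+\infty$ are sound bookkeeping that the paper leaves implicit.
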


\begin{proof}
By \eqref{eq:tree_neighbourhood_identity}, the current tree
$\Lambda^{n}$ is an admissible candidate in
\eqref{eq:joint_minimizing_movement}. Choosing
$\Gamma=\Lambda^{n}$ and $v=u^{n}$, the reproduction property
\eqref{eq:projection_reproduction} implies that the incremental distance
term vanishes. The candidate state is then precisely $\mathfrak V^{n}$.
Minimality of $(\Lambda^{n+1},u^{n+1})$ therefore gives
\eqref{eq:discrete_energy_inequality}.
\end{proof}

Summing \eqref{eq:discrete_energy_inequality} from $n=0$ to $K-1$ gives
\begin{align}
\mathcal E
\left(
\mathfrak V^{K}
\right)
+
\sum_{n=0}^{K-1}
\frac{1}{2h_{n}}
\left\|
u^{n+1}
-
\mathcal Q_{\Lambda^{n+1}\leftarrow\Lambda^{n}}
u^{n}
\right\|_{H}^{2}
\leq
\mathcal E
\left(
\mathfrak V^{0}
\right).
\label{eq:cumulative_energy_bound}
\end{align}
In particular, the sequence
$\{\mathcal E(\mathfrak V^{n})\}_{n=0}^{K}$ is nonincreasing, and the
accumulated transfer-corrected increments are controlled by the initial
energy.

\subsection{Energy stability of the variational split scheme}

The split scheme in \eqref{eq:coefficient_step}--\eqref{eq:split_updated_state}
also inherits an energy inequality when both subproblems are solved exactly.

\begin{proposition}[Energy stability of the split update]
\label{prop:split_energy_stability}
Let $\widetilde u^{n+1}$ solve \eqref{eq:coefficient_step}, and let
$\Lambda^{n+1}$ solve \eqref{eq:tree_step}. Then
\begin{align}
\mathcal E
\left(
\mathfrak V^{n+1}
\right)
+
\frac{1}{2h_{n}}
\left\|
\widetilde u^{n+1}-u^{n}
\right\|_{H}^{2}
+
\frac{\zeta}{2}
d_{\mathbb T}^{2}
\left(
\Lambda^{n+1},
\Lambda^{n}
\right)
\leq
\mathcal E
\left(
\mathfrak V^{n}
\right).
\label{eq:split_energy_inequality}
\end{align}
\end{proposition}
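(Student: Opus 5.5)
The plan is to chain two comparison arguments, one for each subproblem, using that retaining the current tree is always admissible by \eqref{eq:tree_neighbourhood_identity} and that transfer onto the same tree is the identity by \eqref{eq:projection_reproduction}.

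\emph{Step 1 (coefficient step).} We compare the minimizer $\widetilde u^{n+1}$ of \eqref{eq:coefficient_step} with the competitor $v=u^{n}\in V_{\Lambda^{n}}$. Since $\mathfrak V_{\Lambda^{n}}(u^{n})=\mathfrak V^{n}$ by uniqueness of coefficients, minimality gives
\begin{align}
\frac{1}{2h_{n}}\left\|\widetilde u^{n+1}-u^{n}\right\|_{H}^{2}
+\mathcal E\left(\mathfrak V_{\Lambda^{n}}(\widetilde u^{n+1})\right)
\leq
\mathcal E\left(\mathfrak V^{n}\right).
\end{align}
We may assume $\mathcal E(\mathfrak V^{n})<\infty$; otherwise the statement is trivial.

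\emph{Step 2 (tree step).} In \eqref{eq:tree_step} we take the competitor $\Gamma=\Lambda^{n}$. Then $v_{\Lambda^{n}}^{n+1}=\mathcal Q_{\Lambda^{n}\leftarrow\Lambda^{n}}\widetilde u^{n+1}=\widetilde u^{n+1}$ by \eqref{eq:projection_reproduction}, and $d_{\mathbb T}(\Lambda^{n},\Lambda^{n})=0$. Minimality of $\Lambda^{n+1}$ together with \eqref{eq:transfer_after_adaptation}--\eqref{eq:split_updated_state} (which identify $\mathfrak V_{\Lambda^{n+1}}(v^{n+1}_{\Lambda^{n+1}})$ with $\mathfrak V^{n+1}$) yields
\begin{align}
\mathcal E\left(\mathfrak V^{n+1}\right)+\frac{\zeta}{2}d_{\mathbb T}^{2}\left(\Lambda^{n+1},\Lambda^{n}\right)
\leq
\mathcal E\left(\mathfrak V_{\Lambda^{n}}(\widetilde u^{n+1})\right).
\end{align}

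\emph{Step 3.} Adding the inequalities of Steps 1 and 2 cancels the intermediate energy and gives \eqref{eq:split_energy_inequality}.

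The only delicate point is bookkeeping rather than analysis. One must check that the intermediate energy $\mathcal E(\mathfrak V_{\Lambda^{n}}(\widetilde u^{n+1}))$ is finite, which follows from Step 1, so that the addition is legitimate. One must also check that the candidate state built from $\widetilde u^{n+1}$ on $\Lambda^{n}$ coincides with the state that is evaluated. Both points are immediate from the finite-dimensional uniqueness of coefficient representations.
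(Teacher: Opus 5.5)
Your proposal is correct and follows the paper's proof: use $v=u^{n}$ as a competitor in the fixed-tree step, use $\Gamma=\Lambda^{n}$ as a competitor in the tree step (where the transfer is the identity and the tree penalty vanishes), and add the two inequalities. Your checks that the intermediate energy is finite and that $\mathfrak V_{\Lambda^{n}}(u^{n})=\mathfrak V^{n}$ state explicitly what the paper leaves implicit.
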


\begin{proof}
Using $v=u^{n}$ as a competitor in \eqref{eq:coefficient_step} gives the
fixed-tree energy inequality. In the subsequent tree minimization,
$\Gamma=\Lambda^{n}$ is admissible by
\eqref{eq:tree_neighbourhood_identity}. For this choice,
\eqref{eq:nested_injection} implies that the transferred function is
$\widetilde u^{n+1}$, while the tree-change penalty vanishes. Combining the
two minimizing properties yields \eqref{eq:split_energy_inequality}.
\end{proof}

\begin{corollary}[Finite tree activity]
\label{cor:finite_tree_activity}
Assume $\zeta>0$ and let
$\{\mathfrak V^{n}\}_{n\geq0}$ be generated by the exact variational split
scheme.  Let
\begin{align}
\mathcal E_{\inf}
=
\inf_{\mathfrak V\in\mathbb A}
\mathcal E(\mathfrak V)
>
-\infty.
\label{eq:energy_lower_bound_tree_activity}
\end{align}
Then the number of iterations at which the adaptive tree changes is finite.
More precisely,
\begin{align}
\#\left\{
n\geq0:
\Lambda^{n+1}\neq\Lambda^{n}
\right\}
\leq
\frac{
2\big(
\mathcal E(\mathfrak V^{0})-\mathcal E_{\inf}
\big)
}{
\zeta
}.
\label{eq:number_tree_changes_bound}
\end{align}
Consequently, there exists an index $n_{\ast}$ such that
\begin{align}
\Lambda^{n}
=
\Lambda^{n_{\ast}},
\qquad
n\geq n_{\ast}.
\label{eq:eventual_tree_stabilization}
\end{align}
\end{corollary}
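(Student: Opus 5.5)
Summing the split energy inequality of Proposition~\ref{prop:split_energy_stability} over many steps and counting the steps at which the tree moves gives the result directly. For each $n$, drop the nonnegative incremental term $\frac{1}{2h_n}\|\widetilde u^{n+1}-u^n\|_H^2$ from \eqref{eq:split_energy_inequality} to obtain
\begin{align*}
\mathcal E(\mathfrak V^{n+1})+\frac{\zeta}{2}d_{\mathbb T}^{2}(\Lambda^{n+1},\Lambda^{n})\leq\mathcal E(\mathfrak V^{n}).
\end{align*}
Summing from $n=0$ to an arbitrary $M-1$ and telescoping yields
\begin{align*}
\frac{\zeta}{2}\sum_{n=0}^{M-1}d_{\mathbb T}^{2}(\Lambda^{n+1},\Lambda^{n})\leq\mathcal E(\mathfrak V^{0})-\mathcal E(\mathfrak V^{M})\leq\mathcal E(\mathfrak V^{0})-\mathcal E_{\inf}.
\end{align*}
Here I use \eqref{eq:energy_lower_bound_tree_activity}, and I implicitly assume $\mathcal E(\mathfrak V^0)<\infty$. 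If $\mathcal E(\mathfrak V^0)=\infty$, the bound \eqref{eq:number_tree_changes_bound} is trivially true.

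The key observation is integrality of the tree-change distance \eqref{eq:tree_distance}. Since $d_{\mathbb T}(\Gamma,\Lambda)=|\Gamma\triangle\Lambda|$ is a nonnegative integer, $\Lambda^{n+1}\neq\Lambda^n$ forces $d_{\mathbb T}\geq1$, hence $d_{\mathbb T}^2\geq1$. The sum above therefore dominates $\#\{0\le n<M:\Lambda^{n+1}\neq\Lambda^n\}$. This gives the bound $2(\mathcal E(\mathfrak V^0)-\mathcal E_{\inf})/\zeta$ uniformly in $M$. Letting $M\to\infty$ gives \eqref{eq:number_tree_changes_bound}, because the counting sets increase monotonically in $M$.

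Stabilization \eqref{eq:eventual_tree_stabilization} then follows immediately. The set of change indices is finite, so take $n_\ast$ to be one plus its largest element, or $n_\ast=0$ if it is empty. For $n\geq n_\ast$ there are no further changes, and induction gives $\Lambda^n=\Lambda^{n_\ast}$.

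The only point needing care is the Haar implementation remark, where $d_{\mathbb T}$ is instead measured via symmetric differences of refined cells. That variant is still integer-valued and vanishes only for equal trees, so the same argument applies. I also need to confirm that the proposition's inequality holds at every step of the sequence, which requires each subproblem to be solved exactly, as assumed.
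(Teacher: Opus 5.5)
Your proposal is correct and follows the paper's proof: you discard the coefficient-increment terms in \eqref{eq:split_energy_inequality}, telescope against $\mathcal E_{\inf}$, and use that $d_{\mathbb T}^{2}\geq1$ whenever $\Lambda^{n+1}\neq\Lambda^{n}$; your finite-$M$ truncation and explicit choice of $n_{\ast}$ simply make the limiting step more careful. One small refinement: when $\mathcal E(\mathfrak V^{0})=\infty$ the argument gives only the vacuous bound, not finiteness or stabilization, so the finite initial energy required in Algorithm~\ref{alg:AMDI} is genuinely needed (the paper's proof also uses it implicitly).
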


\begin{proof}
Summing \eqref{eq:split_energy_inequality} and discarding the nonnegative
coefficient-increment terms gives
\begin{align}
\frac{\zeta}{2}
\sum_{n\geq0}
d_{\mathbb T}^{2}
\left(
\Lambda^{n+1},
\Lambda^{n}
\right)
\leq
\mathcal E(\mathfrak V^{0})
-
\mathcal E_{\inf}.
\end{align}
Whenever
$\Lambda^{n+1}\neq\Lambda^{n}$,
the symmetric-difference distance
\eqref{eq:tree_distance} is a positive integer and therefore satisfies
$d_{\mathbb T}^{2}\geq1$.
The number of nontrivial tree changes is consequently bounded by
\eqref{eq:number_tree_changes_bound}, which proves eventual stabilization.
\end{proof}

The estimate \eqref{eq:split_energy_inequality} applies to the exact
variational split update. The frozen-weight approximation introduced in
Section~\ref{sec:discrete_evolution} replaces the first minimization by a
linearized problem and therefore does not, by itself, imply decay of the
original state-dependent energy $\mathcal E$.

\subsection{Frozen-state intrinsic diffusion}

To study the intrinsic diffusion operator independently of tree adaptation,
fix a reference adaptive state
\begin{align}
\overline{\mathfrak V}
=
\mathfrak V
\left(
\Lambda,
\overline{\mathbf c}
\right)
\in
\mathbb A
\label{eq:frozen_reference_state}
\end{align}
and hold the associated interaction weights fixed. Consider the linear
evolution
\begin{align}
\frac{d u}{dt}
+
\mathcal L_{\overline{\mathfrak V}}u
=
0,
\qquad
u(t)\in V_{\Lambda}.
\label{eq:frozen_function_diffusion}
\end{align}
If $\mathbf c(t)$ denotes the coefficient vector of $u(t)$ in
$\Phi_{\Lambda}$, then \eqref{eq:function_operator} yields
\begin{align}
G_{\Lambda}
\frac{d\mathbf c}{dt}
+
L_{\overline{\mathfrak V}}
\mathbf c
=
\mathbf 0.
\label{eq:frozen_coefficient_diffusion}
\end{align}

This is the coefficient representation of the $H$-space diffusion generated
by $\mathcal L_{\overline{\mathfrak V}}$. In particular, the generator in
coefficient coordinates is
$G_{\Lambda}^{-1}L_{\overline{\mathfrak V}}$, rather than
$L_{\overline{\mathfrak V}}$ unless the active basis is
$H$-orthonormal.

\subsection{Conservation of the uniform-mode moment}

\begin{proposition}[Conservation of the uniform-mode moment]
\label{prop:uniform_mode_conservation}
Let $\mathbf c(t)$ solve \eqref{eq:frozen_coefficient_diffusion}. Then
\begin{align}
\frac{d}{dt}
\left(
\mathbf 1^{\mathrm T}
G_{\Lambda}
\mathbf c(t)
\right)
=
0.
\label{eq:uniform_mode_conservation}
\end{align}
\end{proposition}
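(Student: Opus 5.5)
The plan is a direct differentiation along the frozen coefficient flow, with the kernel identity \eqref{eq:operator_null_constant} from Proposition~\ref{prop:operator_properties} doing the essential work. Since the weights are frozen at $\overline{\mathfrak V}$, the matrices $G_{\Lambda}$ and $L_{\overline{\mathfrak V}}$ are constant in time. The scalar $t\mapsto\mathbf 1^{\mathrm T}G_{\Lambda}\mathbf c(t)$ is then a fixed linear functional applied to $\mathbf c(t)$. It is therefore differentiable whenever $\mathbf c$ is, with derivative $\mathbf 1^{\mathrm T}G_{\Lambda}\,\frac{d\mathbf c}{dt}$.

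The key step is to substitute the equation \eqref{eq:frozen_coefficient_diffusion} in the form $G_{\Lambda}\frac{d\mathbf c}{dt}=-L_{\overline{\mathfrak V}}\mathbf c$. No inversion of $G_{\Lambda}$ is needed, and this is exactly why the conserved quantity carries the Gram weighting. This gives
\begin{align*}
\frac{d}{dt}\left(\mathbf 1^{\mathrm T}G_{\Lambda}\mathbf c(t)\right)
=-\mathbf 1^{\mathrm T}L_{\overline{\mathfrak V}}\mathbf c(t)
=-\left(L_{\overline{\mathfrak V}}^{\mathrm T}\mathbf 1\right)^{\mathrm T}\mathbf c(t).
\end{align*}
By the symmetry of $L_{\overline{\mathfrak V}}$ (Proposition~\ref{prop:operator_properties}, from \eqref{eq:symmetric_weights}), we have $L_{\overline{\mathfrak V}}^{\mathrm T}\mathbf 1=L_{\overline{\mathfrak V}}\mathbf 1$. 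By \eqref{eq:operator_null_constant}, this vanishes.

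Equivalently, one can use the form identity \eqref{eq:operator_form_identity} with $\mathbf x=\mathbf 1$: $\mathbf 1^{\mathrm T}L_{\overline{\mathfrak V}}\mathbf c=a_{\overline{\mathfrak V}}(\mathbf 1,\mathbf c)=0$, because every difference $1-1$ vanishes. This route avoids even invoking symmetry explicitly.

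There is no real obstacle. The only point requiring care is regularity: solutions of the linear constant-coefficient system \eqref{eq:frozen_coefficient_diffusion} with $G_{\Lambda}$ invertible are smooth, namely $\mathbf c(t)=\exp(-tG_{\Lambda}^{-1}L_{\overline{\mathfrak V}})\mathbf c(0)$, so the differentiation is justified. A further point worth noting is that the conserved quantity is $\mathbf 1^{\mathrm T}G_{\Lambda}\mathbf c=\langle u(t),\sum_{\lambda}\psi_{\lambda}\rangle_{H}$, not $\mathbf 1^{\mathrm T}\mathbf c$ itself, unless the basis is orthonormal.
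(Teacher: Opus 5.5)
Your proof is correct and follows the paper's argument: left-multiply \eqref{eq:frozen_coefficient_diffusion} by $\mathbf 1^{\mathrm T}$, then use the symmetry of $L_{\overline{\mathfrak V}}$ together with $L_{\overline{\mathfrak V}}\mathbf 1=\mathbf 0$. Your alternative route through the form identity $a_{\overline{\mathfrak V}}(\mathbf 1,\mathbf c)=0$, the regularity remark, and the interpretation of the conserved quantity as $\langle u(t),\sum_{\lambda}\psi_{\lambda}\rangle_{H}$ are all accurate.
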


\begin{proof}
Left-multiplying \eqref{eq:frozen_coefficient_diffusion} by
$\mathbf 1^{\mathrm T}$ and using
\eqref{eq:operator_null_constant} together with the symmetry of
$L_{\overline{\mathfrak V}}$ gives
\eqref{eq:uniform_mode_conservation}.
\end{proof}

The conserved quantity in \eqref{eq:uniform_mode_conservation} is the
$H$-inner-product moment associated with the function obtained by
synthesizing the uniform coefficient vector. It should not, in general, be
identified with physical mass or with the integral of $u$ over $\Omega$.
Such an interpretation requires an additional constant-reproduction
property of the active basis, which is not assumed here.

Accordingly, the uniform coefficient mode is an algebraic invariant of the
coefficient interaction graph, not a claim that the corresponding synthesized
function is spatially constant. If preservation of the physical constant
function is required in a particular application, the null space of the
interaction operator must instead be constructed to contain the coefficient
representation of that function.

\subsection{Contractivity on a frozen adaptive state}

\begin{proposition}[Contractivity]
\label{prop:contractivity}
For every $t\geq0$, the semigroup generated by
$-\mathcal L_{\overline{\mathfrak V}}$ satisfies
\begin{align}
\left\|
\exp
\left(
-t\mathcal L_{\overline{\mathfrak V}}
\right)
u
\right\|_{H}
\leq
\|u\|_{H},
\qquad
u\in V_{\Lambda}.
\label{eq:contractivity}
\end{align}
\end{proposition}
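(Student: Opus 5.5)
The plan is an energy argument in the $H$ inner product on the finite-dimensional space $V_{\Lambda}$. Fix $u\in V_{\Lambda}$ and set $u(t)=\exp(-t\mathcal L_{\overline{\mathfrak V}})u$. Since $\mathcal L_{\overline{\mathfrak V}}$ is a linear operator on the finite-dimensional space $V_{\Lambda}$, the exponential is defined by its power series. The map $t\mapsto u(t)$ is then smooth and is the unique solution of \eqref{eq:frozen_function_diffusion} with $u(0)=u$. No semigroup theory beyond this elementary fact is needed.

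The key step is to differentiate the squared norm:
\begin{align*}
\frac{d}{dt}\|u(t)\|_{H}^{2}
=
-2\left\langle \mathcal L_{\overline{\mathfrak V}}u(t),u(t)\right\rangle_{H}
=
-2\,a_{\overline{\mathfrak V}}\big(\mathbf c(t),\mathbf c(t)\big)
\leq 0 .
\end{align*}
Here $\mathbf c(t)$ is the coefficient vector of $u(t)$ with respect to $\Phi_{\Lambda}$. The middle equality is the variational identity \eqref{eq:function_operator_variational}. The inequality follows from \eqref{eq:quadratic_identity} in Proposition~\ref{prop:operator_properties}, since $a_{\overline{\mathfrak V}}(\mathbf x,\mathbf x)=\mathbf x^{\mathrm T}L_{\overline{\mathfrak V}}\mathbf x\geq0$. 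The weights are frozen at $\overline{\mathfrak V}$, so the form does not depend on $t$. Hence $\|u(t)\|_{H}^{2}$ is nonincreasing, and $\|u(t)\|_{H}\leq\|u(0)\|_{H}=\|u\|_{H}$ for all $t\geq0$.

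An equivalent spectral route would also work. By Proposition~\ref{prop:operator_properties}, $\mathcal L_{\overline{\mathfrak V}}$ is self-adjoint and positive semidefinite on $(V_{\Lambda},\langle\cdot,\cdot\rangle_{H})$. It therefore has an $H$-orthonormal eigenbasis with eigenvalues $\mu_{j}\geq0$, and the exponential acts diagonally with factors $e^{-t\mu_{j}}\in(0,1]$, which gives the bound immediately. In coefficient coordinates this is the generalized eigenproblem $L_{\overline{\mathfrak V}}\mathbf v=\mu G_{\Lambda}\mathbf v$.

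The only point requiring care is that the contraction holds in the $H$ norm, not the Euclidean coefficient norm. In coefficients, $\|u\|_{H}^{2}=\mathbf c^{\mathrm T}G_{\Lambda}\mathbf c$ and the generator is $G_{\Lambda}^{-1}L_{\overline{\mathfrak V}}$, which is not Euclidean-symmetric in general. In that formulation the same computation reads
\begin{align*}
\frac{d}{dt}\big(\mathbf c^{\mathrm T}G_{\Lambda}\mathbf c\big)
=2\,\mathbf c^{\mathrm T}G_{\Lambda}\dot{\mathbf c}
=-2\,\mathbf c^{\mathrm T}L_{\overline{\mathfrak V}}\mathbf c
\leq 0,
\end{align*}
using \eqref{eq:frozen_coefficient_diffusion} and the symmetry of $G_{\Lambda}$. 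This confirms that the Gram-weighted norm is the correct one, and that no contractivity claim is made for $\|\mathbf c(t)\|_{\ell^{2}}$.
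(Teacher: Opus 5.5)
Your proof is correct, but your main route differs from the paper's. The paper argues purely spectrally: by Proposition~\ref{prop:operator_properties}, $\mathcal L_{\overline{\mathfrak V}}$ is self-adjoint and positive semidefinite on $(V_{\Lambda},\langle\cdot,\cdot\rangle_{H})$, so its spectrum lies in $[0,\infty)$ and the multipliers of $\exp(-t\mathcal L_{\overline{\mathfrak V}})$ lie in $[0,1]$. That is exactly the alternative you sketch in your third paragraph.

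Your primary energy argument uses only the sign of the quadratic form along the trajectory, $\langle\mathcal L_{\overline{\mathfrak V}}u(t),u(t)\rangle_{H}=a_{\overline{\mathfrak V}}(\mathbf c(t),\mathbf c(t))\geq0$. It never diagonalizes and does not use self-adjointness. It would therefore still work for a non-symmetric interaction matrix whose symmetric part is positive semidefinite. For such a non-normal generator, knowing only where the spectrum lies would not give contractivity. Your computation also yields a quantitative identity, $\|u(t)\|_{H}^{2}+2\int_{0}^{t}a_{\overline{\mathfrak V}}(\mathbf c(s),\mathbf c(s))\,ds=\|u\|_{H}^{2}$, rather than just an inequality.

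The spectral route gives the exact factor $e^{-t\mu_{j}}$ on each eigenmode. It is the same machinery the paper reuses immediately afterwards for the exponential decay \eqref{eq:long_time_decay} with rate $\lambda_{2}$. From your energy identity you would reach that estimate only after two more steps. First, you would need the Poincar\'e-type bound $a_{\overline{\mathfrak V}}(\mathbf e,\mathbf e)\geq\lambda_{2}\|\mathbf e\|_{G_{\Lambda}}^{2}$ for vectors $\mathbf e$ that are $G_{\Lambda}$-orthogonal to $\mathbf 1$; this is the variational characterization of $\lambda_{2}$. Second, you would apply Gr\"onwall's inequality.

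Your closing remark is correct and worth keeping: the contraction holds in the Gram norm $\|\cdot\|_{G_{\Lambda}}$, not in $\ell^{2}$ of the coefficients. The paper makes the same point implicitly when it notes that the generator in coefficient coordinates is $G_{\Lambda}^{-1}L_{\overline{\mathfrak V}}$.
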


\begin{proof}
By Proposition~\ref{prop:operator_properties},
$\mathcal L_{\overline{\mathfrak V}}$ is self-adjoint and positive
semidefinite on $V_{\Lambda}$ with respect to the $H$ inner product.
Therefore its spectrum is contained in $[0,\infty)$, and the spectral multipliers of
$\exp(-t\mathcal L_{\overline{\mathfrak V}})$ belong to $[0,1]$ for
$t\geq0$. This proves \eqref{eq:contractivity}.
\end{proof}

\subsection{Long-time limit on a connected adaptive graph}

Assume that the positive-weight interaction graph associated with
$\overline{\mathfrak V}$ is connected. Consider the generalized eigenvalue
problem
\begin{align}
L_{\overline{\mathfrak V}}
\mathbf q_{j}
=
\lambda_{j}
G_{\Lambda}
\mathbf q_{j},
\qquad
j=1,\ldots,N_{\Lambda}.
\label{eq:generalized_eigenproblem}
\end{align}
The eigenvectors may be chosen $G_{\Lambda}$-orthonormal, and the generalized
eigenvalues satisfy
\begin{align}
0
=
\lambda_{1}
<
\lambda_{2}
\leq
\cdots
\leq
\lambda_{N_{\Lambda}}.
\label{eq:ordered_spectrum}
\end{align}
The zero eigenvalue is simple by \eqref{eq:connected_kernel}, with associated
eigenspace $\operatorname{span}\{\mathbf 1\}$.

Define
\begin{align}
\overline c_{G}
=
\frac{
\mathbf 1^{\mathrm T}
G_{\Lambda}
\mathbf c(0)
}{
\mathbf 1^{\mathrm T}
G_{\Lambda}
\mathbf 1
}.
\label{eq:generalized_coefficient_average}
\end{align}
Then the solution of \eqref{eq:frozen_coefficient_diffusion} satisfies
\begin{align}
\left\|
\mathbf c(t)
-
\overline c_{G}\mathbf 1
\right\|_{G_{\Lambda}}
\leq
\exp
\left(
-\lambda_{2}t
\right)
\left\|
\mathbf c(0)
-
\overline c_{G}\mathbf 1
\right\|_{G_{\Lambda}},
\label{eq:long_time_decay}
\end{align}
where
\begin{align}
\|\mathbf x\|_{G_{\Lambda}}^{2}
=
\mathbf x^{\mathrm T}
G_{\Lambda}
\mathbf x.
\label{eq:Gram_norm}
\end{align}
Hence the frozen-state diffusion converges exponentially to the
$H$-orthogonal projection of the initial condition onto
$\ker\mathcal L_{\overline{\mathfrak V}}$, with decay rate determined by the
generalized spectral gap $\lambda_{2}$.

The limiting state in \eqref{eq:long_time_decay} should therefore be
interpreted as the projection onto the algebraic null mode of the frozen
coefficient graph; no physical mass-equilibration interpretation is implied.

If the active basis is $H$-orthonormal, then $G_{\Lambda}=I$ and
\eqref{eq:generalized_coefficient_average} reduces to the ordinary arithmetic
mean of the initial coefficient vector.

\section{Adaptive Multiresolution Diffusion for Inverse Problems}
\label{sec:inverse_problems}

\subsection{General inverse problem}

Let $u^{\dagger}\in H$ be an exact state satisfying
\begin{align}
A u^{\dagger}
=
f,
\label{eq:exact_inverse_problem}
\end{align}
and let the noisy data satisfy \eqref{eq:noisy_data}. Throughout this section,
the fidelity term is the quadratic functional defined in
\eqref{eq:fidelity_functional}.

The adaptive reconstruction is formulated directly on the state space
$\mathbb A$. For parameters $\alpha>0$, $\beta\geq0$, and $\tau>0$, seek
\begin{align}
\boxed{
\mathfrak V_{\alpha,\beta,\tau}^{\delta}
\in
\operatorname*{argmin}_{\mathfrak V\in\mathbb A}
\mathcal E
\left(
\mathfrak V
\right).
}
\label{eq:adaptive_inverse_problem}
\end{align}
Writing
\begin{align}
\mathfrak V_{\alpha,\beta,\tau}^{\delta}
=
\mathfrak V
\left(
\Lambda_{\alpha,\beta,\tau}^{\delta},
\mathbf c_{\alpha,\beta,\tau}^{\delta}
\right),
\label{eq:adaptive_inverse_state}
\end{align}
the corresponding reconstructed function is
\begin{align}
u_{\alpha,\beta,\tau}^{\delta}
=
u_{\mathfrak V_{\alpha,\beta,\tau}^{\delta}}
\in
V_{\Lambda_{\alpha,\beta,\tau}^{\delta}}.
\label{eq:adaptive_inverse_reconstruction}
\end{align}
Thus the minimization determines simultaneously the reconstructed function,
the active multiresolution tree, the state-dependent intrinsic diffusion
operator, and the number
$N_{\Lambda_{\alpha,\beta,\tau}^{\delta}}$ of active coefficients.

The formulation in \eqref{eq:adaptive_inverse_problem} is global in the
adaptive state space, in contrast to the local tree neighbourhood used in
the time-discrete evolution of Section~\ref{sec:discrete_evolution}.
Accordingly, existence of a global minimizer requires control of both the
tree complexity and the coefficient variables.

\begin{theorem}[Existence of an adaptive inverse reconstruction]
\label{thm:inverse_reconstruction_existence}
Assume that the multiresolution hierarchy has finite branching and that the
conditions of Section~\ref{sec:variational} hold. If
\begin{align}
\beta>0,
\qquad
\tau>0,
\label{eq:inverse_positive_regularization}
\end{align}
then the minimization problem \eqref{eq:adaptive_inverse_problem} admits at
least one solution.
\end{theorem}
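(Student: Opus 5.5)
The plan is to reduce the global problem \eqref{eq:adaptive_inverse_problem} to finitely many fixed-tree problems. The tree-complexity penalty confines minimizing sequences to trees of bounded cardinality, and finite branching makes the set of such admissible trees finite. On each remaining tree, the sparsity term with $\beta>0$ supplies the coercivity that the fidelity term alone does not provide; it substitutes for the incremental $H$-norm term used in Theorem~\ref{thm:discrete_existence}.

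First we fix a finite-energy reference state. The root tree $\mathcal I_0$ with $\mathbf c=\mathbf 0$ gives $\mathcal E=\mathcal F(0)+\tau\mathcal C(\mathcal I_0)=:E_0<\infty$. Here $\mathcal F(0)=\tfrac12\|f^\delta\|_Y^2$, the diffusion term vanishes on the zero vector, and $\varphi(0)=0$. Every term of \eqref{eq:total_adaptive_energy} is nonnegative for the quadratic fidelity, so any state with $\mathcal E(\mathfrak V)\le E_0$ satisfies $\tau\rho_\ast|\Lambda|\le\tau\mathcal C(\Lambda)\le E_0$ by \eqref{eq:positive_level_cost}. It follows that $|\Lambda|\le M:=E_0/(\tau\rho_\ast)$, and it suffices to minimize over trees with at most $M$ indices.

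Next we show there are only finitely many admissible trees with $|\Lambda|\le M$. By \eqref{eq:tree_admissibility}, every index in such a tree has its full ancestral path in $\Lambda$. Hence its level is at most $M$, and it is reachable from the finite set $\mathcal I_0$ by at most $M$ parent-to-child steps. Finite branching then bounds the number of candidate indices, and therefore the number of candidate subsets. This combinatorial step is the main point to check carefully. One needs finite branching to mean that each index has finitely many children, and that levels are determined by depth along the parent map (or at least that depth bounds reachability). I will state this interpretation explicitly.

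Finally, on each of the finitely many trees $\Lambda$, we minimize $\mathcal E_\Lambda(\mathbf c)$ over $\mathbb R^{N_\Lambda}$. This function is lower semicontinuous: the fidelity term is continuous, the diffusion term is continuous by \eqref{eq:weight_coefficient_continuity}, and $\varphi$ is lower semicontinuous. It is also coercive. Since $\beta\nu_\ast>0$, the sparsity term is bounded below by $\beta\nu_\ast\sum_\lambda\varphi(|c_\lambda|)$, which tends to $\infty$ as $|\mathbf c|\to\infty$ because $\varphi$ is coercive. The other terms are nonnegative, so the sublevel sets are compact and a minimizer exists on each tree. Taking the smallest of these finitely many minima gives a global minimizer, since every state outside the restricted class has energy exceeding $E_0$ and is therefore not better. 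If $\mathcal F$ were only bounded below rather than nonnegative, the same argument goes through with $E_0$ replaced by $E_0-\inf\mathcal F$.
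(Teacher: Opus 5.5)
Your proof is correct and uses the same ingredients as the paper: $\tau\rho_\ast>0$ bounds $|\Lambda|$, admissibility with finite branching and the finite root set leaves finitely many trees, $\beta\nu_\ast>0$ with coercive $\varphi$ gives coercivity on each tree, and lower semicontinuity concludes. The difference is only organizational: the paper takes a minimizing sequence and extracts a subsequence with a constant tree and convergent coefficients, whereas you bound a sublevel set through an explicit finite-energy reference state and then minimize tree by tree. Your closing aside for a merely bounded-below $\mathcal F$ would also need some state of finite energy, since $\mathcal F(0)$ could then be $+\infty$.
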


\begin{proof}
Let $\{\mathfrak V_{k}\}_{k\geq1}\subset\mathbb A$ be a minimizing sequence,
with
$\mathfrak V_{k}=\mathfrak V(\Lambda_{k},\mathbf c_{k})$.
For the quadratic fidelity considered here, all contributions to
\eqref{eq:total_adaptive_energy} are nonnegative. Hence boundedness of the energy values along the minimizing sequence and
\eqref{eq:positive_level_cost} imply
\begin{align}
\tau\rho_{\ast}
\left|
\Lambda_{k}
\right|
\leq
\tau
\mathcal C
\left(
\Lambda_{k}
\right)
\leq
\mathcal E
\left(
\mathfrak V_{k}
\right).
\label{eq:inverse_tree_cardinality_bound}
\end{align}
Therefore the cardinalities $|\Lambda_{k}|$ are uniformly bounded.
Tree admissibility implies that the presence of an index at a given
resolution level also requires its ancestral chain to be active. Together
with finite branching and the finite root set, the cardinality bound leaves
only finitely many admissible trees that can occur along the minimizing
sequence. After passage to a subsequence, we may therefore assume that
\begin{align}
\Lambda_{k}
=
\Lambda
\label{eq:inverse_fixed_tree_subsequence}
\end{align}
for a fixed admissible tree $\Lambda$.

On this fixed tree, \eqref{eq:positive_sparsity_weights}, the coercivity of
$\varphi$, and $\beta>0$ imply that the coefficient regularization is
coercive in $\mathbf c_{k}\in\mathbb R^{N_{\Lambda}}$. Hence
$\{\mathbf c_{k}\}$ is bounded. Passing to a further subsequence gives
convergence to some
$\mathbf c_{\ast}\in\mathbb R^{N_{\Lambda}}$.

The quadratic fidelity is continuous, the intrinsic diffusion contribution
is continuous by \eqref{eq:weight_coefficient_continuity}, and the
coefficient regularization is lower semicontinuous. Consequently,
\begin{align}
\mathcal E
\left(
\mathfrak V(\Lambda,\mathbf c_{\ast})
\right)
\leq
\liminf_{k\rightarrow\infty}
\mathcal E
\left(
\mathfrak V(\Lambda,\mathbf c_{k})
\right).
\label{eq:inverse_lower_semicontinuity}
\end{align}
Thus $\mathfrak V(\Lambda,\mathbf c_{\ast})$ is a minimizer of
\eqref{eq:adaptive_inverse_problem}.
\end{proof}

The condition $\beta>0$ in
Theorem~\ref{thm:inverse_reconstruction_existence} is a sufficient
coercivity condition rather than a necessary one. The case $\beta=0$ may
also admit a minimizer if the fidelity and intrinsic diffusion terms provide
sufficient coercivity on the relevant adaptive spaces.

\subsection{Adaptive multiresolution diffusion imaging}

For imaging applications, let $\Omega\subset\mathbb R^{2}$ and retain the
ambient space $H=L^{2}(\Omega)$ introduced in Section~\ref{sec:spaces}.
Different imaging modalities are represented by different choices of the
bounded forward operator $A$.

For denoising, one takes
\begin{align}
A
=
I.
\label{eq:denoising_model}
\end{align}
For deblurring, $A$ may be a bounded convolution operator
\begin{align}
A
=
K_{k},
\label{eq:deblurring_model}
\end{align}
associated with a prescribed point-spread function $k$. For computed
tomography, one may take
\begin{align}
A
=
\mathcal R,
\label{eq:CT_operator}
\end{align}
where $\mathcal R$ denotes the Radon transform between the chosen object and
data Hilbert spaces. For magnetic resonance imaging, a standard choice is
\begin{align}
A
=
P_{\Xi}\mathcal F,
\label{eq:MRI_operator}
\end{align}
where $\mathcal F$ denotes the Fourier transform and $P_{\Xi}$ the sampling
operator associated with the acquired frequency set $\Xi$. Within the
real-valued framework adopted here, complex MRI measurements may equivalently
be represented by their real and imaginary components in an appropriate real
data Hilbert space.

For any of these forward models, the adaptive multiresolution diffusion
imaging reconstruction is precisely the state-space minimization
\eqref{eq:adaptive_inverse_problem} with the total energy
\eqref{eq:total_adaptive_energy}. In particular, the intrinsic diffusion term
is evaluated at the candidate state itself. Hence, when the interaction
weights depend on the coefficients, the reconstruction retains the full
state dependence of the diffusion regularizer rather than replacing it by
the frozen-weight approximation introduced in
Section~\ref{sec:discrete_evolution}.

The formulation therefore couples data consistency, intrinsic diffusion,
coefficient regularization, and adaptive model complexity within a single
variational problem. The present analysis concerns reconstruction at fixed
noise level and fixed regularization parameters. Parameter-choice rules and
regularization convergence as $\delta\rightarrow0$ require additional
assumptions and are not invoked here.

The numerical study in Section~\ref{sec:numerical_validation} focuses on
denoising, $A=I$, in order to isolate the adaptive operator and
tree-evolution mechanisms. The deblurring, tomography, and Fourier-sampling
models above are included as admissible forward operators in the abstract
framework but are not numerically investigated in the present study.

\section{Adaptive Numerical Algorithm}
\label{sec:algorithm}

The variational formulation developed above leads naturally to a family of
adaptive numerical procedures based on the split scheme
\eqref{eq:coefficient_step}--\eqref{eq:split_updated_state}.
The theory requires only that a finite admissible candidate neighbourhood be
constructed at each iteration.  Algorithm~\ref{alg:AMDI} summarizes this
generic variational structure.  The denoising implementation used for the
numerical experiments employs a concrete data-driven bulk refinement and
coarsening rule described below; the local indicator
\eqref{eq:local_indicator} is presented as a more general alternative and is
not used to generate the numerical trees reported in Section~\ref{sec:numerical_validation}.

\begin{algorithm}[t]
\caption{Adaptive Multiresolution Diffusion Imaging (AMDI)}
\label{alg:AMDI}
\begin{algorithmic}[1]

\State Choose an initial admissible tree
$\Lambda^{0}\in\mathbb T_{\mathrm{adm}}$.

\State Choose an initial approximation
$u^{0}\in V_{\Lambda^{0}}$, determine its coefficient vector
$\mathbf c^{0}$, and set
$\mathfrak V^{0}=\mathfrak V(\Lambda^{0},\mathbf c^{0})$,
with $\mathcal E(\mathfrak V^{0})<\infty$.

\State Set $n\gets0$.

\While{the stopping criterion is not satisfied}

    \State Compute the fixed-tree variational update
    $\widetilde u^{n+1}$ from \eqref{eq:coefficient_step}.

    \State Let $\widetilde{\mathbf c}^{\,n+1}$ be the coefficient vector of
    $\widetilde u^{n+1}$ in $\Phi_{\Lambda^{n}}$ and define the intermediate
    fixed-tree state
    $\widetilde{\mathfrak V}^{\,n+1}
    =
    \mathfrak V(\Lambda^{n},\widetilde{\mathbf c}^{\,n+1})$.

\State Construct refinement and coarsening scores using a prescribed
application-dependent marking rule.

\State Generate a finite candidate neighbourhood
$\mathcal N(\Lambda^{n})$, including refinement, coarsening, and the
unchanged tree $\Lambda^{n}$.

\State Enforce tree admissibility for every candidate tree.

    \State Select $\Lambda^{n+1}$ by the variational tree update
    \eqref{eq:tree_step}.

    \State Transfer $\widetilde u^{n+1}$ to the selected adaptive space
    according to \eqref{eq:transfer_after_adaptation}.

    \State Construct the updated state
    $\mathfrak V^{n+1}$ according to \eqref{eq:split_updated_state}.

    \State $n\gets n+1$.

\EndWhile

\State \Return
$u^{n}$,
$\Lambda^{n}$,
and
$\mathcal L_{\mathfrak V^{n}}$.

\end{algorithmic}
\end{algorithm}

Because both variational subproblems in
Algorithm~\ref{alg:AMDI} are solved according to the split formulation of
Section~\ref{sec:discrete_evolution}, the energy estimate
\eqref{eq:split_energy_inequality} applies directly to the exact algorithm.

For the denoising experiments in Section~\ref{sec:numerical_validation},
where $A=I$, candidate trees are generated directly from the full-depth Haar
expansion of the noisy data.  For each candidate spatial cell $K$ at level
$\ell$, the refinement score is
\begin{align}
s_{\ell,K}
=
\sum_{q=1}^{2^{d}-1}
\left|
d_{\ell,K,q}
\left(
f^{\delta}
\right)
\right|^{2},
\label{eq:data_detail_score}
\end{align}
where $d_{\ell,K,q}(f^{\delta})$ denotes the corresponding Haar detail
coefficient of the data.  The implementation constructs at most three
candidate trees: the unchanged tree, a tree obtained by refining the highest
scoring fraction of admissible leaves, and a tree obtained by coarsening the
lowest scoring fraction of admissible prunable cells.  The reported
calculations use refinement and coarsening fractions of $0.10$.
The variational criterion \eqref{eq:tree_step} then selects among these
candidates.

This data-driven rule is specific to the present denoising prototype and is
not asserted to provide a directly transferable marking strategy for the
general forward operators considered in
Section~\ref{sec:inverse_problems}.

\subsection{Local refinement and coarsening indicators}

For more general forward problems, one possible local marking indicator may
combine coefficient magnitude, data mismatch, and local intrinsic-diffusion
variation. For
$\mathfrak V=\mathfrak V(\Lambda,\mathbf c_{\Lambda})$, define
\begin{align}
\eta_{\lambda}^{2}
=
&
\theta_{\mathrm c}
|c_{\lambda}|^{2}
+
\theta_{\mathrm d}
\left|
\left[
A_{\Lambda}^{\ast}
\left(
A_{\Lambda}\mathbf c_{\Lambda}
-
f^{\delta}
\right)
\right]_{\lambda}
\right|^{2}
\nonumber
\\
&
+
\theta_{\mathrm j}
\alpha
\sum_{\mu\in\Lambda}
\omega_{\lambda\mu}
\left(
\mathfrak V
\right)
|c_{\lambda}-c_{\mu}|^{2},
\qquad
\lambda\in\Lambda,
\label{eq:local_indicator}
\end{align}
where
$\theta_{\mathrm c},\theta_{\mathrm d},\theta_{\mathrm j}\geq0$
are scaling parameters.

The indicator \eqref{eq:local_indicator} is not used in the denoising
prototype reported in Section~\ref{sec:numerical_validation}; it is included
as a general candidate construction for applications in which a
state-dependent marking rule is desired.

The three contributions measure, respectively, the activity of the
coefficient, its contribution to the projected data-fidelity gradient, and
the local variation associated with the intrinsic diffusion graph. The
indicator \eqref{eq:local_indicator} is intended as a computational marking
quantity for indicator-based realizations; it is not asserted here to be a
reliable or efficient a posteriori error estimator for the full
state-dependent variational problem.

For an indicator-based realization, one may introduce thresholds satisfying
\begin{align}
0
\leq
\varepsilon_{\mathrm{crs}}
<
\varepsilon_{\mathrm{ref}},
\label{eq:indicator_threshold_ordering}
\end{align}
refinement may be triggered by
\begin{align}
\eta_{\lambda}
>
\varepsilon_{\mathrm{ref}}
\quad
\Longrightarrow
\quad
\text{mark $\lambda$ for refinement},
\label{eq:refine_rule}
\end{align}
whereas coarsening may be triggered by
\begin{align}
\eta_{\lambda}
<
\varepsilon_{\mathrm{crs}}
\quad
\Longrightarrow
\quad
\text{mark $\lambda$ for coarsening}.
\label{eq:coarsen_rule}
\end{align}
Such a separation introduces hysteresis and can reduce rapid oscillation
between refinement and coarsening. Refinement is performed
together with the tree closure required by the admissibility condition of
Section~\ref{sec:spaces}, whereas coarsening is permitted only when removal
of the corresponding indices preserves tree admissibility. In particular,
non-root indices may be removed only together with any descendants whose
retention would violate the ancestral closure condition.

In such an indicator-based realization, the marked indices may be used to
construct a finite candidate neighbourhood
$\mathcal N(\Lambda^{n})$. The unchanged tree is always retained as a
candidate in accordance with \eqref{eq:tree_neighbourhood_identity}.
Consequently, marking proposes local structural modifications, whereas the
variational criterion \eqref{eq:tree_step} determines which admissible tree
is actually accepted.

\subsection{Frozen-weight implementation}

When solving the full fixed-tree problem
\eqref{eq:coefficient_step} is computationally expensive, the frozen-weight
linearization of Section~\ref{sec:discrete_evolution} may be used instead.
At iteration $n$, the interaction weights are evaluated at
$\mathfrak V^{n}$, the matrix $L_{\mathfrak V^{n}}$ is assembled, and the
approximate coefficient vector
$\widehat{\mathbf c}^{\,n+1}$ is obtained from
\eqref{eq:implicit_linear_update}.

Because this linearized update does not automatically satisfy the energy
estimate proved for the exact variational coefficient step, the reported
implementation enforces energy monotonicity by backtracking along the
frozen-weight update direction.

Let $\widehat{\mathbf c}^{\,n+1}$ denote the full frozen-weight solution of
\eqref{eq:implicit_linear_update}.  Starting from $\vartheta_{0}=1$, define
\begin{align}
\mathbf c_{j}^{\,n+1}
=
\mathbf c^{n}
+
\vartheta_{j}
\left(
\widehat{\mathbf c}^{\,n+1}
-
\mathbf c^{n}
\right),
\qquad
\vartheta_{j}=2^{-j}.
\label{eq:frozen_weight_backtracking}
\end{align}
The first trial satisfying
\begin{align}
\mathcal E
\left(
\mathfrak V
\left(
\Lambda^{n},
\mathbf c_{j}^{\,n+1}
\right)
\right)
\leq
\mathcal E
\left(
\mathfrak V^{n}
\right)
+
10^{-12}
\label{eq:frozen_weight_energy_safeguard}
\end{align}
is accepted.  The numerical implementation tests at most twenty
backtracking reductions.  The accepted coefficient vector and its
represented function are then used in the subsequent tree-adaptation step.
Thus energy monotonicity of the numerical linearization is enforced
computationally and should be distinguished from the unconditional estimate
\eqref{eq:split_energy_inequality} proved for the exact variational split
step.

\subsection{Stopping criteria}

The iteration may be terminated when one or more prescribed conditions are
satisfied. Typical choices include a sufficiently small relative change in
the reconstructed function, a sufficiently small decrease of the adaptive
energy, stabilization of the active tree, or a prescribed maximum number of
iterations. When an estimate of the noise level is available, a
discrepancy-based stopping condition may additionally be imposed through the
data residual
\begin{align}
\left\|
A u^{n}
-
f^{\delta}
\right\|_{Y}.
\label{eq:algorithm_data_residual}
\end{align}
The particular stopping rule is application dependent and does not alter
the variational structure of the adaptive update.

\section{Numerical Validation}
\label{sec:numerical_validation}

The numerical experiments are designed to test the properties that motivate
the adaptive multiresolution diffusion construction rather than to optimize a
single reconstruction metric.  In particular, we examine: (i) the structural
properties of the assembled intrinsic operator, (ii) compatibility under
refinement, (iii) the discrete energy inequality, (iv) multiresolution
localization in a separate Haar approximation diagnostic, (v) the
reconstruction--representation tradeoff, (vi) convergence robustness and
initialization sensitivity, and (vii) the separate numerical roles of the
adaptive tree and the state-dependent interactions.  A higher-order
VAMPyR/MRCPP calculation is finally used as a cross-representation check of
the regional localization produced by an adaptive multiwavelet
representation; it is not used as an independent validation of the AMDI
diffusion operator.

Unless otherwise stated, image intensities are normalized to $[0,1]$ and the
spatial domain is $\Omega=[0,1]^2$.  The AMDI coefficient experiments use an
orthonormal adaptive Haar representation, so that the Gram matrix introduced
in Section~\ref{sec:intrinsic_operator} reduces to the identity,
$G_{\Lambda}=I$.  This choice makes the coefficient and $L^2$ norms identical
on the represented space and allows the operator diagnostics to be evaluated
without an additional basis-conditioning effect.  It is not required by the
theory developed in Sections~\ref{sec:spaces}--\ref{sec:analysis}.

For the numerical implementation, the coefficient-sensitive kernel is
$\kappa_{\mathrm c}(r)=\exp[-(r/\sigma_{\mathrm c})^{2}]$ and the
coefficient-regularization weights are
$\nu_{\lambda}=1+0.05\,\ell_{\lambda}$.
The two-dimensional Haar interaction graph contains same-support
cross-orientation couplings, matching-orientation nearest-neighbour
couplings, matching-orientation parent--child couplings, and root
scaling--wavelet couplings.
The parent--child refinement-decay factor in
\eqref{eq:refinement_decay_kernel} is included in all reported AMDI
calculations.  The denoising parameter sweep uses
$\gamma_{\mathrm r}\in\{0.20,0.35,0.60\}$; the dedicated
refinement-consistency and energy-dissipation diagnostics use the values
specified separately below.

For the denoising experiments, Gaussian noise with standard deviation
$\sigma=0.08$ is added to a $64\times64$ synthetic image, and the resulting
intensities are clipped to $[0,1]$.  
The competing methods are linear heat diffusion, Perona--Malik anisotropic
diffusion \cite{PeronaMalik1990}, wavelet soft thresholding
\cite{DonohoJohnstone1994,Donoho1995}, and total-variation (TV) denoising
using the Rudin--Osher--Fatemi model \cite{RudinOsherFatemi1992}.
Each baseline is tuned independently for RMSE
and SSIM on the same calibration realization; thus AMDI is not compared with
arbitrarily chosen baseline parameters. The AMDI calibration uses the full
quality--representation sweep on noise seed $11$, after which the selected
parameters are frozen before the subsequent repeatability and held-out
noise-realization assessments.

The reconstruction metrics are
\begin{align}
\operatorname{RMSE}
&=
\left(
\frac{1}{N}
\sum_{i=1}^{N}
|u_i-u_i^\dagger|^2
\right)^{1/2},
\label{eq:RMSE}
\\
\operatorname{PSNR}
&=
10\log_{10}
\left(
\frac{u_{\max}^2}{\operatorname{MSE}}
\right),
\label{eq:PSNR}
\\
\mathcal C_{\mathrm{rel}}
&=
\frac{N_{\Lambda}}{N_{\mathrm{full}}},
\label{eq:relative_complexity}
\end{align}
together with the structural similarity index
$\operatorname{SSIM}(u,u^\dagger)$.  The relative complexity
$\mathcal C_{\mathrm{rel}}$ is the fraction of active degrees of freedom
relative to the corresponding uniformly refined representation.
Thus, $\mathcal C_{\mathrm{rel}}$ measures the size of the active
representation; it is not intended as a measure of wall-clock time or total
arithmetic cost.

\subsection{Intrinsic-operator structure and refinement consistency}
\label{subsec:numerical_operator}

We first verify the structural identities established in
Section~\ref{sec:intrinsic_operator}.  For a representative one-dimensional
multiresolution state, the assembled intrinsic Laplacian satisfies
\begin{align}
\|L_{\mathfrak V}-L_{\mathfrak V}^{\mathrm T}\|_2
&=0,
\\
\|L_{\mathfrak V}\mathbf 1\|_2
&=6.94\times10^{-16}.
\end{align}
The smallest computed eigenvalue is
$-1.16\times10^{-16}$, which is at the level of floating-point roundoff,
whereas the largest eigenvalue is $3.0812$.  The first nonzero eigenvalue is
\begin{align}
\lambda_2
=
1.0935\times10^{-2},
\end{align}
so that the expected zero mode is clearly separated from the positive
spectrum.  These calculations verify that the assembled numerical operator realizes the
symmetry, positive-semidefiniteness, and coefficient-space kernel identities
built into the construction.

Refinement consistency is tested on a smooth reference function using
successive uniform levels.  This dedicated refinement-consistency diagnostic
uses $\gamma_{\mathrm r}=2$.  The value is chosen specifically for this
interlevel-consistency test and is larger than the refinement-decay values
used in the denoising parameter sweep.  The experiment therefore demonstrates
the refinement behavior of this parameterized operator family and should not
be interpreted as evidence that every choice of
$\gamma_{\mathrm r}$ satisfies
\eqref{eq:refinement_consistency_limit}.

In the orthonormal implementation the normalized coefficient defect coincides
with the corresponding represented-function defect associated with
\eqref{eq:refinement_commutator_defect}.  For the state-dependent operator,
the measured sequence is
\begin{align}
6.21\times10^{-3},
\quad
4.74\times10^{-3},
\quad
9.43\times10^{-4},
\quad
1.39\times10^{-4},
\quad
1.77\times10^{-5},
\end{align}
for coarse levels $2,\ldots,6$, respectively.  The defect therefore decreases
by more than two orders of magnitude over the tested refinement sequence.
The state-independent comparison displays a small coarse-level
nonmonotonicity but converges to essentially the same fine-level scale,
$1.77\times10^{-5}$.

\begin{figure}[t]
\centering
\includegraphics[
    width=0.62\textwidth
]{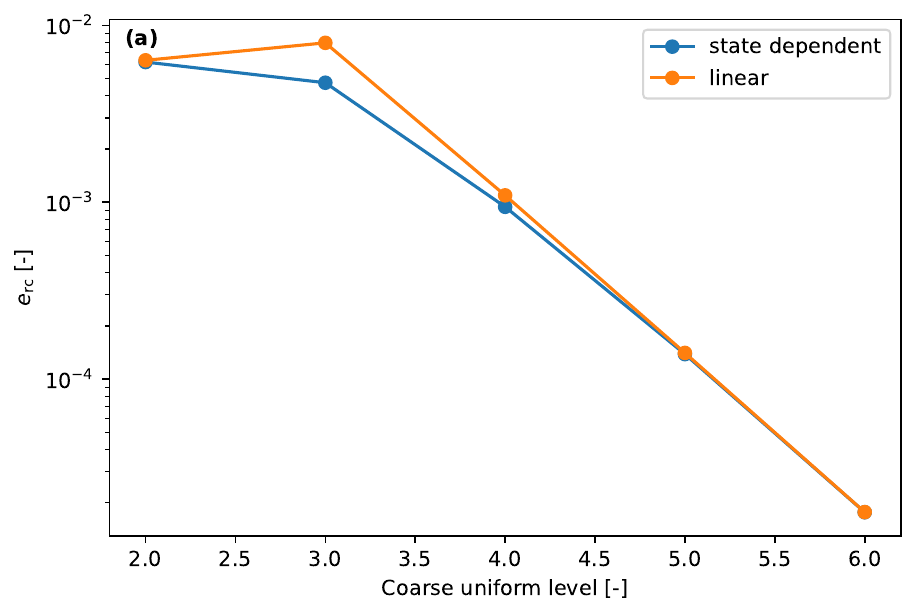}
\caption{
Refinement-commutator defect for the state-dependent and state-independent
intrinsic operators.  The defect is evaluated between successive uniform
levels using the orthonormal Haar realization of
\eqref{eq:refinement_commutator_defect}. For $\gamma_{\mathrm r}=2$, the decreasing fine-level defect provides
numerical evidence for refinement consistency of this parameterized operator
family; it is not used as a proof of the limit in
\eqref{eq:refinement_consistency_limit}.
}
\label{fig:refinement_commutator}
\end{figure}

\subsection{Discrete variational stability}
\label{subsec:numerical_energy}

The discrete energy inequality is tested using a one-dimensional
$128$-point diagnostic with noise level $\sigma=0.05$, random seed $7$,
refinement-decay parameter $\gamma_{\mathrm r}=0.4$, and a uniform
level-three Haar tree.  The tree remains fixed throughout the eight-step
diagnostic, so the experiment isolates the variational coefficient update.
In addition to monitoring
$\mathcal E(\mathfrak V^n)$, we evaluate the residual
\begin{align}
R_n
=
&
\mathcal E(\mathfrak V^{n+1})
+
\frac{1}{2h_n}
\left\|
\widetilde u^{n+1}-u^n
\right\|_H^2
\nonumber
\\
&
+
\frac{\zeta}{2}
d_{\mathbb T}^{2}
\left(
\Lambda^{n+1},
\Lambda^n
\right)
-
\mathcal E(\mathfrak V^n).
\label{eq:numerical_energy_residual}
\end{align}
For this fixed-tree diagnostic,
$d_{\mathbb T}(\Lambda^{n+1},\Lambda^n)=0$ at every step.
According to the split energy inequality
\eqref{eq:split_energy_inequality}, the residual
$R_n$ should therefore be nonpositive. 
The energy decreases from
$6.61456\times10^{-3}$ to $6.60468\times10^{-3}$ over eight iterations.
Every computed residual is negative; the residual closest to zero is
$-1.36\times10^{-8}$ at the final iteration.  The result therefore verifies
the discrete dissipation mechanism at numerical precision.

\begin{figure}[t]
\centering
\includegraphics[
    width=0.88\textwidth
]{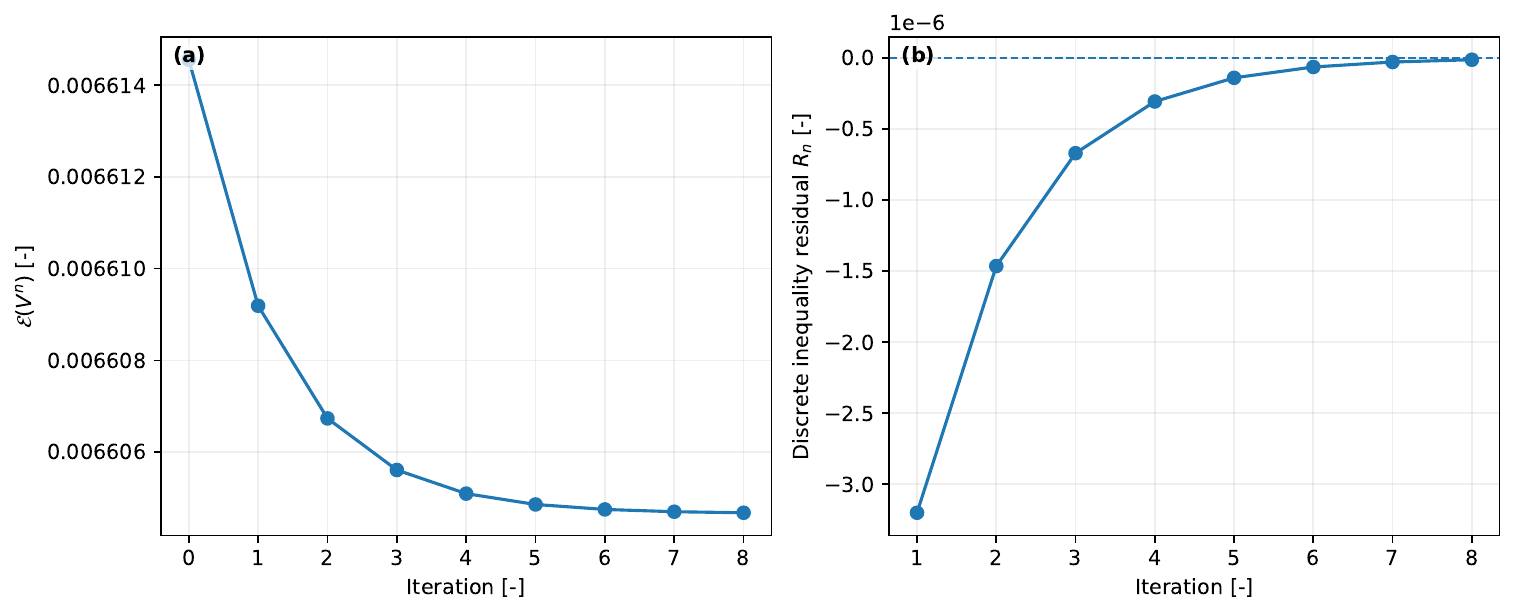}
\caption{
Discrete variational stability.
(a) State energy $\mathcal E(\mathfrak V^n)$ as a function of iteration.
(b) Residual $R_n$ defined in \eqref{eq:numerical_energy_residual}.
All nontrivial residuals remain negative and approach zero as the iteration
approaches stationarity.
}
\label{fig:energy_decay_validation}
\end{figure}

In the fully adaptive experiments below, energy monotonicity is enforced by
the frozen-weight backtracking safeguard described in
Section~\ref{sec:algorithm}.  Those calculations therefore provide numerical
stability and repeatability tests of the implemented algorithm, whereas
\cref{fig:energy_decay_validation} directly tests the exact discrete
variational inequality.

\subsection{Multiresolution localization on a heterogeneous target}
\label{subsec:numerical_localization}

A $128\times128$ synthetic target is used to examine whether the adaptive
representation distinguishes regions with qualitatively different local
structure.  The four quadrants contain, respectively, a constant field, a
smooth gradient, a sharp interface, and an oscillatory texture.  Starting
from the full multiresolution representation, the adaptive thresholding
procedure retains only
\begin{align}
N_{\Lambda}=436
\end{align}
basis functions out of $16384$, corresponding to
\begin{align}
\mathcal C_{\mathrm{rel}}
=
2.661\times10^{-2}.
\end{align}
Despite this $97.34\%$ reduction in active degrees of freedom, the adaptive
projection gives
\begin{align}
\operatorname{RMSE}
&=0.05854,
&
\operatorname{PSNR}
&=24.65,
&
\operatorname{SSIM}
&=0.8458.
\end{align}

The refinement map in \cref{fig:synthetic_adaptivity} shows the expected
spatial selectivity.  The constant region remains coarse, the smooth gradient
requires moderate resolution in the Haar representation, and the strongest
refinement is concentrated around the discontinuity and within the
oscillatory region.  The resulting error is likewise localized rather than
distributed uniformly over the domain.

\begin{figure}[t]
\centering
\includegraphics[
    width=0.98\textwidth
]{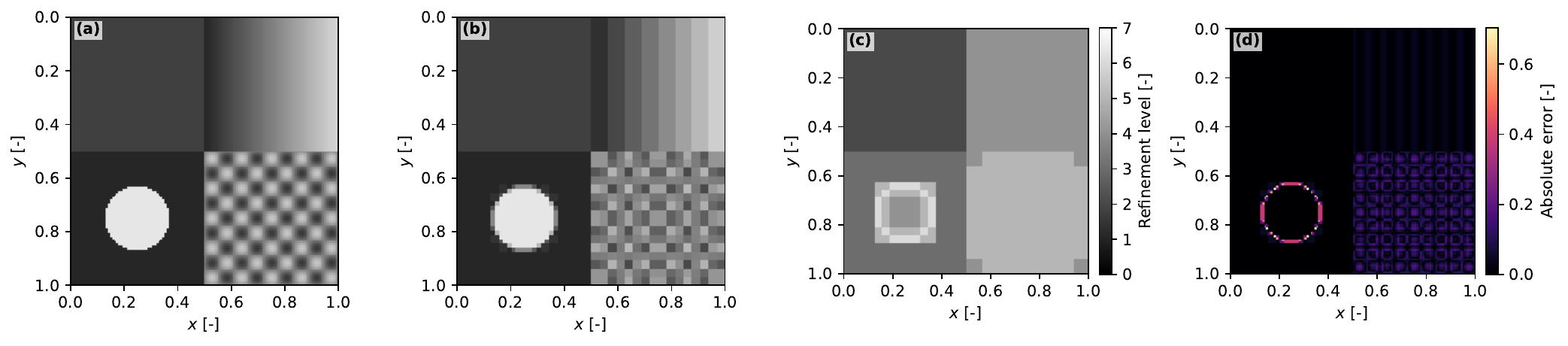}
\caption{
Haar multiresolution localization on the four-region synthetic target.
(a) Reference field.
(b) Adaptive projection.
(c) Local refinement level.
(d) Absolute reconstruction error.
Only $436$ of $16384$ full-resolution degrees of freedom are active,
corresponding to $\mathcal C_{\mathrm{rel}}=0.0266$.
}
\label{fig:synthetic_adaptivity}
\end{figure}

\subsection{Denoising accuracy versus active representation size}
\label{subsec:numerical_benchmark}

The denoising benchmark is evaluated as a multiobjective problem rather than
as a single full-resolution RMSE comparison.  The AMDI parameter sweep
produces a family of solutions spanning different values of
$\mathcal C_{\mathrm{rel}}$, which measures the active AMDI representation
relative to uniform refinement.  The conventional denoising baselines are
shown at $\mathcal C_{\mathrm{rel}}=1$ as full-problem reference methods.
For the wavelet-soft-thresholding baseline, this plotting convention refers
to the full wavelet representation on which thresholding is performed and
should not be interpreted as the fraction of nonzero coefficients remaining
after thresholding.  The resulting comparison is shown in
\cref{fig:quality_complexity}.

Total variation is the strongest conventional baseline in both independently
optimized comparisons.  Its minimum calibration RMSE is
\begin{align}
\operatorname{RMSE}_{\mathrm{TV}}
=
0.04064,
\end{align}
and its maximum calibration SSIM is
\begin{align}
\operatorname{SSIM}_{\mathrm{TV}}
=
0.87686.
\end{align}
AMDI does not improve upon these full-space extrema.  Its minimum RMSE is
$0.05134$ at
$\mathcal C_{\mathrm{rel}}=0.2017$, whereas its maximum SSIM is
\begin{align}
\operatorname{SSIM}_{\mathrm{AMDI}}
=
0.87089
\qquad
\text{at}
\qquad
\mathcal C_{\mathrm{rel}}
=
0.09326.
\end{align}
The SSIM-optimal calibration point occurs at $\beta=0$.  It therefore lies
outside the sufficient coercivity hypothesis $\beta>0$ of
Theorem~\ref{thm:inverse_reconstruction_existence}; that theorem is not
invoked to establish existence of this particular numerical operating point.

For this calibration run, the full frozen-weight proposal is accepted without
backtracking in one of the three iterations, while the remaining two updates
require safeguard backtracking.

Thus the best-SSIM AMDI solution differs from the TV optimum by only
$5.97\times10^{-3}$ in SSIM while retaining $9.33\%$ of the full
representation, i.e., $382$ active degrees of freedom instead of $4096$.
The corresponding reduction in active representation size is $90.67\%$.

The RMSE-optimal AMDI point uses $826$ active degrees of freedom.
A more strongly compressed operating point selected under the constraint
$\mathcal C_{\mathrm{rel}}\le0.1$ uses $361$ active coefficients,
\begin{align}
\mathcal C_{\mathrm{rel}}
&=0.08813,
&
\operatorname{RMSE}
&=0.05312,
&
\operatorname{SSIM}
&=0.85330.
\end{align}
These results demonstrate that the adaptive formulation can retain high
reconstruction quality while substantially reducing the active
representation.  AMDI does not minimize the reconstruction error at
unrestricted representation size; rather, its parameter sweep produces a
family of reconstruction--representation operating points associated with
different adaptive states.

\begin{table}[t]
\centering
\caption{
Principal calibration results for the denoising benchmark.
The TV entries correspond to independently RMSE- and SSIM-tuned
full-resolution calculations.
}
\label{tab:quality_complexity}
\begin{tabular}{lccc}
\hline
Method / operating point
&
RMSE
&
SSIM
&
$\mathcal C_{\mathrm{rel}}$
\\
\hline
TV, RMSE optimum
&
$0.04064$
&
-- 
&
$1.000$
\\
TV, SSIM optimum
&
--
&
$0.87686$
&
$1.000$
\\
AMDI, RMSE optimum
&
$0.05134$
&
--
&
$0.2017$
\\
AMDI, SSIM optimum
&
--
&
$0.87089$
&
$0.09326$
\\
AMDI, $\mathcal C_{\mathrm{rel}}\le0.1$
&
$0.05312$
&
$0.85330$
&
$0.08813$
\\
\hline
\end{tabular}
\end{table}

\begin{figure}[t]
\centering
\includegraphics[
    width=0.90\textwidth
]{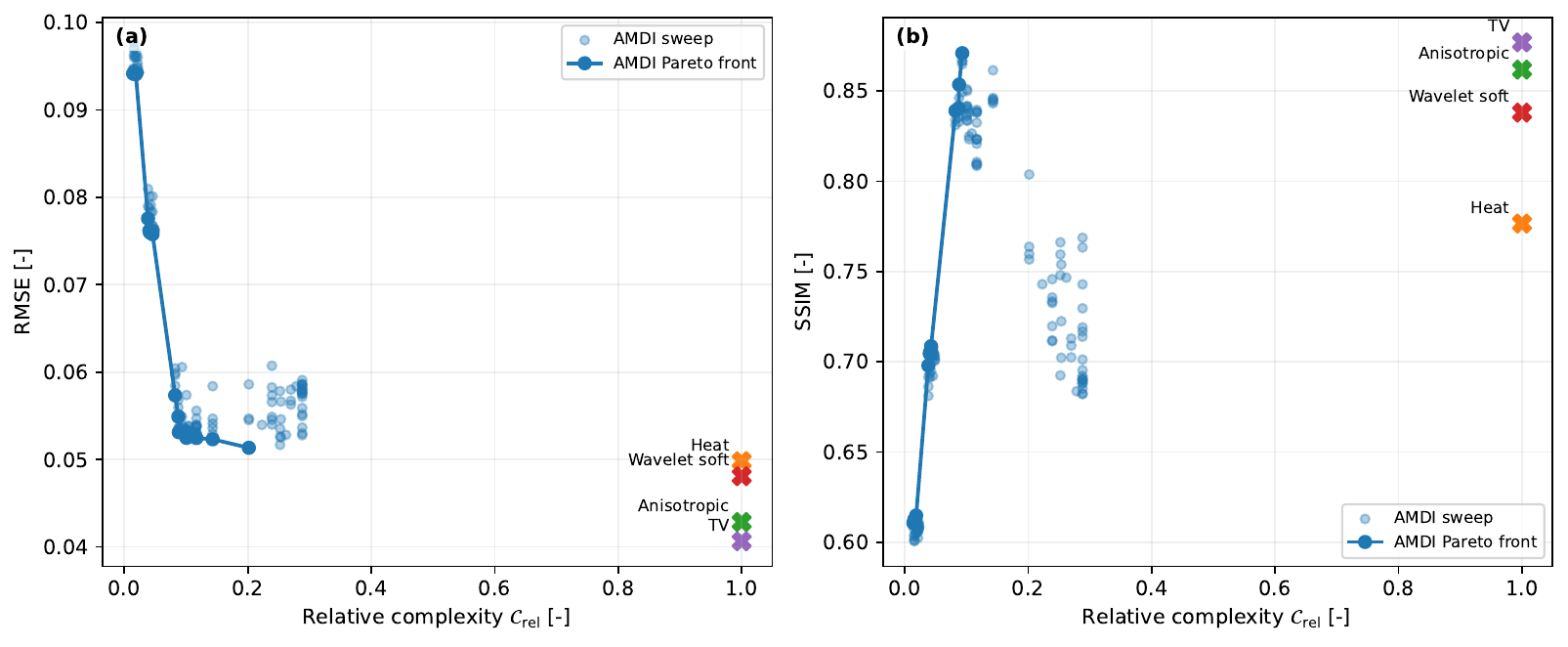}
\caption{
Quality--representation benchmark for Gaussian denoising with
$\sigma=0.08$.
(a) RMSE versus relative AMDI representation size.
(b) SSIM versus relative AMDI representation size.
The AMDI sweep generates a family of adaptive representations with varying
$\mathcal C_{\mathrm{rel}}$.  The conventional denoising methods are shown at
$\mathcal C_{\mathrm{rel}}=1$ as full-problem reference baselines; for
wavelet soft thresholding this plotting convention does not represent the
fraction of nonzero coefficients after thresholding.
Baseline parameters are optimized independently for the metric displayed in
each panel.
}
\label{fig:quality_complexity}
\end{figure}

The comparison therefore should not be interpreted as evidence that AMDI
universally outperforms TV in raw denoising accuracy.  Rather, the numerical
result is that structural reconstruction quality close to that of the best
full-space baseline can be retained with approximately one tenth of the active
representation.

\subsection{Held-out noise-realization assessment}
\label{subsec:numerical_holdout}

The preceding benchmark uses noise seed $11$ for calibration and parameter
selection.  To assess behavior after parameter selection, all method
parameters are subsequently frozen and evaluated on eight previously unseen
noise realizations with seeds
$101$, $103$, $107$, $109$, $113$, $127$, $131$, and $137$.

For the compressed AMDI operating point, the held-out calculations give
\begin{align}
\operatorname{RMSE}
&=
0.05338\pm0.00094,
\\
\operatorname{SSIM}
&=
0.85974\pm0.00657,
\\
\mathcal C_{\mathrm{rel}}
&=
0.0869\pm0.0026.
\end{align}
The SSIM-tuned full-resolution TV baseline gives
\begin{align}
\operatorname{RMSE}
&=
0.04307\pm0.00083,
\\
\operatorname{SSIM}
&=
0.87908\pm0.00565.
\end{align}
Thus the held-out SSIM gap between compressed AMDI and the full-resolution
TV optimum is approximately $1.93\times10^{-2}$, larger than on the
calibration realization.  The two Perona--Malik parameterizations selected
during calibration give held-out mean SSIM values of $0.86691$ and
$0.86252$, respectively.  The held-out results therefore confirm that the
compressed AMDI operating point remains stable across unseen noise
realizations, while also showing that its mean structural accuracy remains
below the strongest full-resolution and anisotropic-diffusion baselines on
this particular denoising problem.

\subsection{Convergence robustness and standardized initialization}
\label{subsec:numerical_robustness}

Robustness is evaluated around the compressed operating point by perturbing
one parameter at a time.  The time step $h$, diffusion weight $\alpha$,
sparsity weight $\beta$, tree penalty $\tau$, and initial coefficient
threshold are each multiplied by $1/2$ and $2$.  The experiment therefore
contains eleven parameter cases, including the unperturbed calculation.

A separate repeatability calculation uses noise seeds
$11$, $17$, $23$, $31$, and $43$.  Since seed $11$ is also the calibration
realization, these five calculations are not treated as a held-out
generalization test; that role is assigned exclusively to
\cref{subsec:numerical_holdout}.

All sixteen calculations remain energy monotone.  For these robustness
trajectories, the full frozen-weight update is accepted without backtracking
at every iteration.  Fourteen of the sixteen runs stop before the maximum
iteration count and satisfy both final convergence tolerances.  Across the
complete $192$-run calibration sweep, however, the full-step acceptance rate
has mean $0.891$ and minimum $0.167$, and $34$ runs require at least one
backtracking reduction.  The safeguard is therefore an active component of
the numerical implementation rather than a purely formal precaution.

The five-realization repeatability calculation gives
\begin{align}
\operatorname{RMSE}
&=
0.05363
\pm
0.00087,
\\
\operatorname{SSIM}
&=
0.85208
\pm
0.00503,
\\
\mathcal C_{\mathrm{rel}}
&=
0.08667
\pm
0.00254.
\label{eq:noise_robustness_results}
\end{align}
These statistics show good repeatability of the selected operating point
over the tested realizations.  The parameter perturbations reveal a more
differentiated sensitivity: changing $h$ has negligible effect on SSIM and
changing $\alpha$ produces only a modest variation, whereas doubling
$\beta$, doubling $\tau$, and doubling the raw initialization threshold
reduce the SSIM to approximately $0.830$, $0.794$, and $0.649$,
respectively.  This stronger dependence on the tree penalty, sparsity
regularization, and raw initialization threshold motivates the standardized
initialization procedure introduced below.

\begin{figure}[t]
\centering
\includegraphics[
    width=0.98\textwidth
]{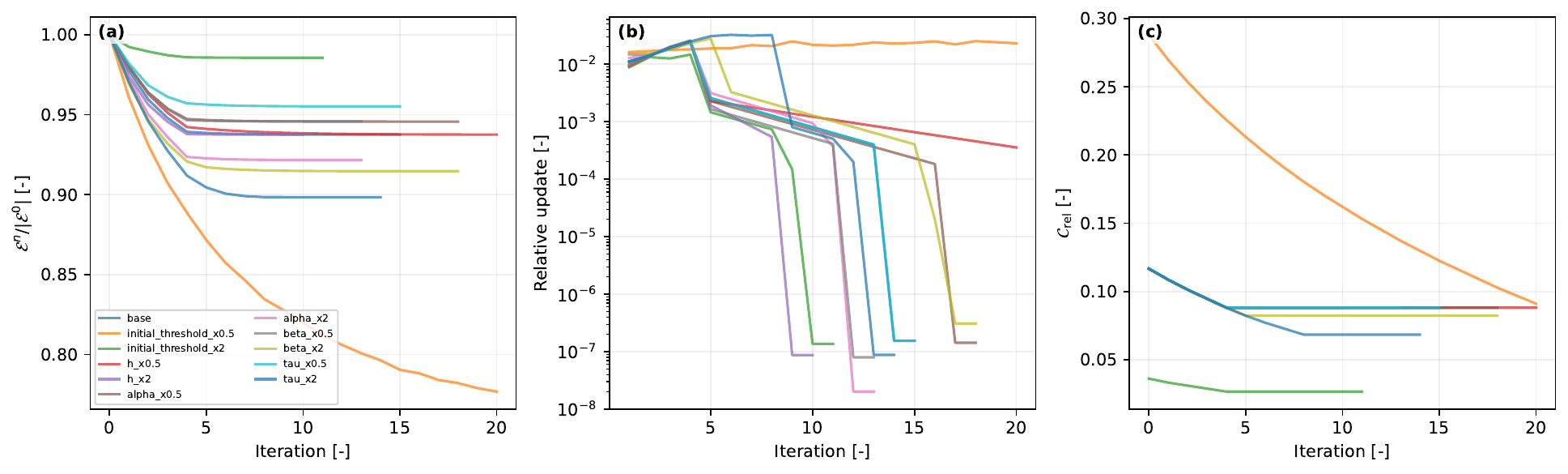}
\caption{
Convergence trajectories under one-at-a-time parameter perturbations.
(a) Energy normalized by the magnitude of the initial energy.
(b) Relative iterate update.
(c) Relative adaptive representation size.
All tested trajectories are energy monotone and the full frozen-weight update
is accepted without backtracking in these robustness runs.
Each normalized energy trajectory is used only to assess within-run
monotonicity; objective values corresponding to different parameter settings
are not compared quantitatively.
}
\label{fig:robustness_convergence}
\end{figure}

The largest sensitivity in the raw perturbation experiment originates from
the initial coefficient threshold.  This is expected because a fixed
coefficient threshold is scale dependent and can generate substantially
different initial trees.  We therefore replace it by a standardized
initialization rule based directly on the prescribed initial relative
complexity
\begin{align}
\mathcal C_{\mathrm{rel}}^0
=
\frac{N_{\Lambda^0}}{N_{\mathrm{full}}}.
\label{eq:initial_relative_complexity}
\end{align}
For each requested value of $\mathcal C_{\mathrm{rel}}^0$, the coefficient
threshold is determined automatically so that an admissible initial tree
satisfies the target complexity as closely as permitted by the discrete tree
structure.  The procedure uses only the noisy input and does not use the
reference solution.

Three initialization budgets,
$\mathcal C_{\mathrm{rel}}^0=0.10$, $0.12$, and $0.14$, are tested over five
noise realizations.  The results are summarized in
\cref{tab:standardized_initialization}.  The intermediate budget
$\mathcal C_{\mathrm{rel}}^0=0.12$ gives the best observed compromise,
\begin{align}
\operatorname{RMSE}
&=
0.05283
\pm
0.00051,
\\
\operatorname{SSIM}
&=
0.86022
\pm
0.00518,
\\
\mathcal C_{\mathrm{rel}}^{\mathrm{final}}
&=
0.08975
\pm
0.00256.
\end{align}
All fifteen standardized-initialization runs remain energy monotone and all
frozen-weight steps are accepted by the safeguard.  This complexity-based
initialization is therefore used as the preferred reproducible specification
of the initial AMDI state.

\begin{table}[t]
\centering
\caption{
Sensitivity to the prescribed initial relative-complexity budget.
Values are means and sample standard deviations over five noise
realizations.
}
\label{tab:standardized_initialization}
\begin{tabular}{cccc}
\hline
$\mathcal C_{\mathrm{rel}}^0$
&
RMSE
&
SSIM
&
$\mathcal C_{\mathrm{rel}}^{\mathrm{final}}$
\\
\hline
$0.10$
&
$0.05927\pm0.00104$
&
$0.82367\pm0.01029$
&
$0.07656\pm0.00337$
\\
$0.12$
&
$0.05283\pm0.00051$
&
$0.86022\pm0.00518$
&
$0.08975\pm0.00256$
\\
$0.14$
&
$0.05295\pm0.00069$
&
$0.85285\pm0.00724$
&
$0.09194\pm0.00222$
\\
\hline
\end{tabular}
\end{table}

The results do not imply complete independence from initialization:
different initial complexity budgets lead to measurably different
accuracy--compression balances.  They do show, however, that an arbitrary
raw threshold can be replaced by an interpretable and reproducible
dimensionless initialization parameter.

\subsection{Matched-complexity ablation}
\label{subsec:numerical_ablation}

The ablation analysis is performed at matched complexity.  For every noise
realization, the full AMDI evolution first selects an adaptive tree.  The
operator variants are then evaluated on exactly that same tree, so that
differences in the reconstruction metrics cannot be attributed to different
numbers of active coefficients.  The maximum within-seed complexity mismatch
is zero.

The results over five noise realizations are listed in
\cref{tab:matched_ablation}.  Freezing the already selected tree has
essentially no effect on the reconstruction, as expected at the final
operating point.  Replacing the state-dependent interaction weights by
state-independent weights increases the mean RMSE from $0.05362$ to
$0.06046$, a deterioration of approximately $12.8\%$ at identical
complexity. The SSIM is also slightly higher for the state-independent variant in all
five paired realizations. The state dependence therefore provides
a clear improvement in amplitude accuracy as measured by RMSE, but the data
do not support a claim that it improves every reconstruction metric.

Removing the intrinsic diffusion contribution after the adaptive tree has
already been selected gives a slightly lower RMSE and slightly higher SSIM
in all five paired realizations.  This result concerns the coefficient
update on a fixed AMDI-selected representation.  It does not constitute an
ablation of intrinsic diffusion from the preceding adaptive tree evolution.
For the present identity-denoising problem, the fixed-tree comparison
therefore provides no evidence of an additional denoising advantage from the
diffusion contribution itself, whereas the state-dependent interaction
weights produce a clear reduction in RMSE at identical representation size.

\begin{table}[t]
\centering
\caption{
Matched-complexity ablation over five noise realizations.
Every variant is evaluated on the same AMDI-selected tree within each seed.
}
\label{tab:matched_ablation}
\begin{tabular}{lccc}
\hline
Variant
&
RMSE
&
SSIM
&
$\mathcal C_{\mathrm{rel}}$
\\
\hline
Adaptive AMDI
&
$0.05362\pm0.00087$
&
$0.85220\pm0.00506$
&
$0.08667\pm0.00254$
\\
Fixed selected tree
&
$0.05361\pm0.00087$
&
$0.85222\pm0.00505$
&
$0.08667\pm0.00254$
\\
No intrinsic diffusion
&
$0.05323\pm0.00084$
&
$0.85543\pm0.00544$
&
$0.08667\pm0.00254$
\\
State-independent weights
&
$0.06046\pm0.00127$
&
$0.85594\pm0.00495$
&
$0.08667\pm0.00254$
\\
\hline
\end{tabular}
\end{table}

\begin{figure}[t]
\centering
\includegraphics[
    width=0.88\textwidth
]{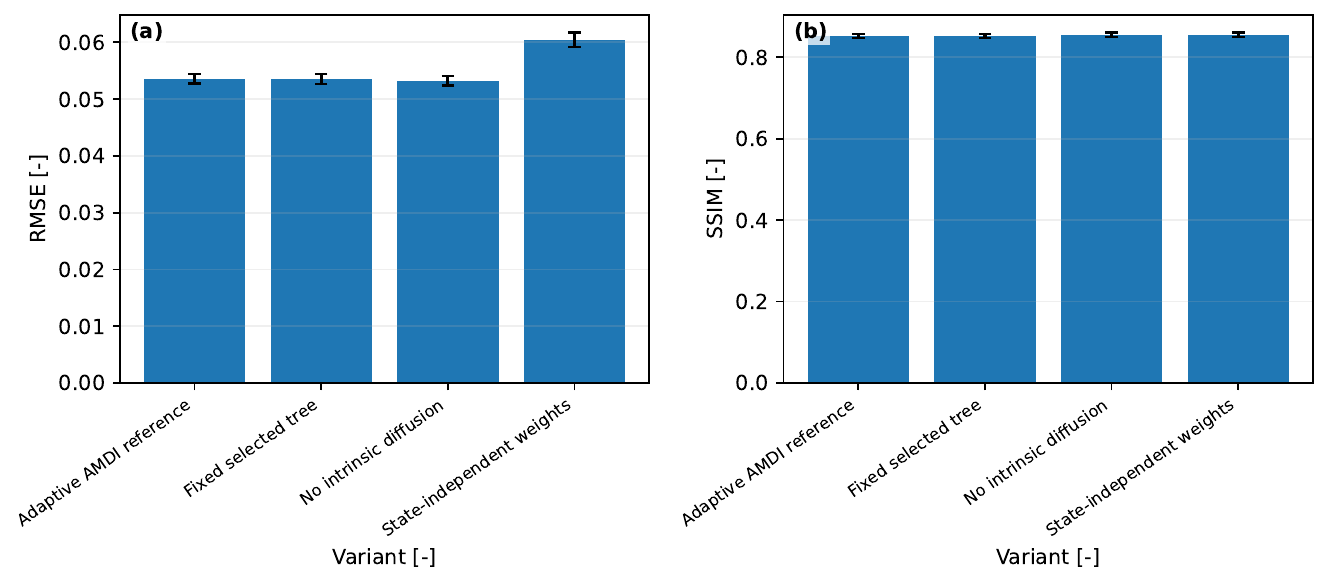}
\caption{
Matched-complexity ablation.
(a) RMSE and
(b) SSIM, reported as mean $\pm$ standard deviation over five noise
realizations.
All variants use exactly the same selected tree within each realization.
}
\label{fig:matched_complexity_ablation}
\end{figure}

\subsection{Higher-order multiwavelet cross-check with VAMPyR}
\label{subsec:numerical_vampyr}

The coefficient-level AMDI calculations above use an orthonormal Haar
backend.  To determine whether the regional localization observed in the
separate Haar approximation diagnostic is also recovered by a higher-order
adaptive multiresolution representation, we perform a cross-representation
calculation with VAMPyR, the Python interface to the MRCPP multiwavelet
framework \cite{BjorgveEtAl2024}.  This experiment cross-checks the adaptive
representation rather than independently validating the AMDI diffusion
operator.

For the four-region target of
\cref{subsec:numerical_localization}, the depth-capped VAMPyR calculation
contains $394$ adaptive end nodes.  Of these, $240$ lie at the deepest
available scale, so the imposed depth cap is active in this localization
experiment.  Consequently, the regional node counts and effective refinement
levels are not interpreted as converged quantities; the calculation is used
only to identify which regions attract adaptive resolution.

Their regional distribution is highly nonuniform:
\begin{align}
N_{\mathrm{end}}^{\mathrm{constant}}
&=1,
&
N_{\mathrm{end}}^{\mathrm{gradient}}
&=1,
&
N_{\mathrm{end}}^{\mathrm{edge}}
&=328,
&
N_{\mathrm{end}}^{\mathrm{texture}}
&=64.
\end{align}
Thus approximately $83.25\%$ of the end nodes are concentrated in the
sharp-interface region and $16.24\%$ in the oscillatory region; the constant
and smooth-gradient regions each require only one end node.

Both representations identify the sharp-interface and oscillatory regions as
substantially more demanding than the smooth regions.  Their detailed
allocation of resolution is not identical, and the VAMPyR edge-region
resolution is additionally limited by the imposed depth cap.  Absolute
refinement levels should therefore not be compared directly.  The Haar
realization is piecewise constant and has approximation order one, whereas
the VAMPyR calculation uses order-five multiwavelets.  We interpret the
comparison only as a qualitative regional localization cross-check, not as
quantitative agreement of refinement levels.

\begin{figure}[t]
\centering
\includegraphics[
    width=0.96\textwidth
]{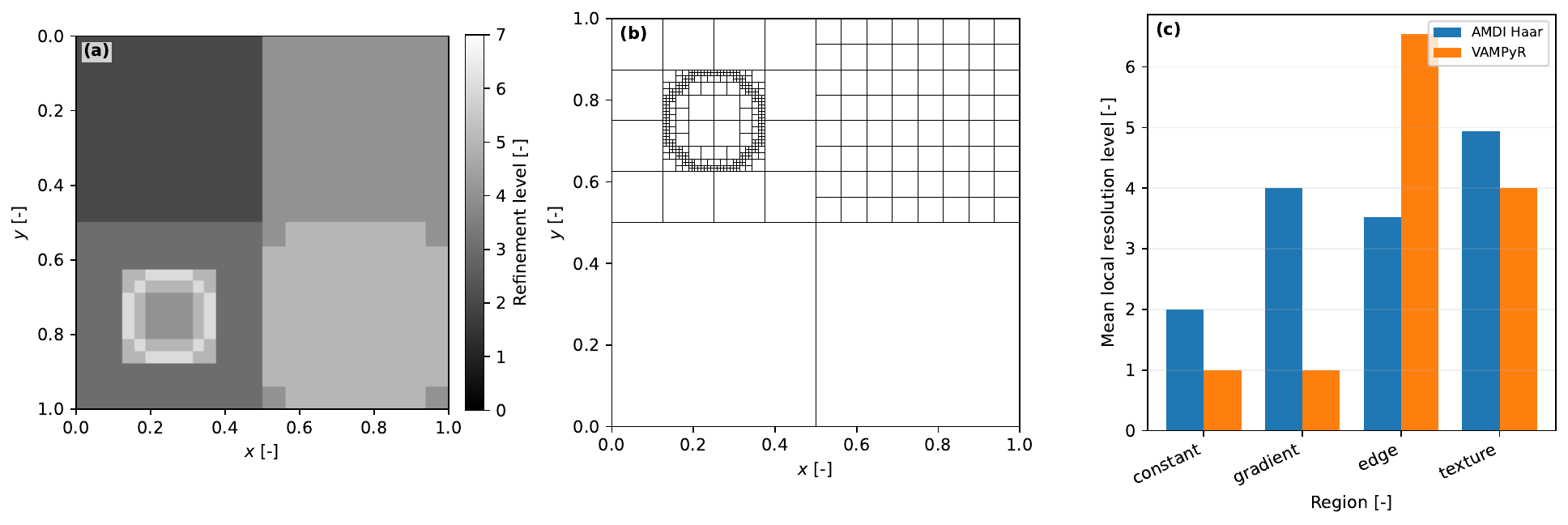}
\caption{
Cross-representation multiresolution localization comparison.
(a) Haar refinement map.
(b) VAMPyR adaptive end-node grid.
(c) Regional resolution statistics.
Both representations allocate substantially greater resolution to the sharp
and oscillatory regions than to the smooth regions.  Quantitative refinement
levels are not compared because the two representations have different local
approximation orders and the VAMPyR localization calculation reaches the
imposed depth cap.
}
\label{fig:vampyr_localization}
\end{figure}

A second VAMPyR calculation tests approximation precision independently of
the localization experiment.  The target is a smooth multiscale function
containing a ramp, a localized Gaussian contribution, a steep hyperbolic
tangent transition, and a localized oscillation.  An order-five
multiwavelet representation is used with maximum depth eight.  The
$10^{-5}$ projection is taken as the internal reference and contains
$367$ end nodes at depth six; the imposed depth cap is never reached.

As the requested precision is tightened from $10^{-2}$ to $3\times10^{-5}$,
the number of end nodes increases from $28$ to $328$, while the $L^2$
distance to the reference decreases from
\begin{align}
7.06\times10^{-4}
\quad\text{to}\quad
1.57\times10^{-6}.
\end{align}
This corresponds to a reduction of the distance to the reference projection
by a factor of approximately $451$ before the reference level itself is
reached.  The experiment is a self-consistency audit of the adaptive
multiwavelet representation rather than a convergence study against an exact
analytic solution.  The relevant observation is that adaptive complexity is
nondecreasing and the distance to the tighter reference projection is
nonincreasing at every tested precision, with no calculation reaching the
maximum allowed depth.

\begin{figure}[t]
\centering
\includegraphics[
    width=0.82\textwidth
]{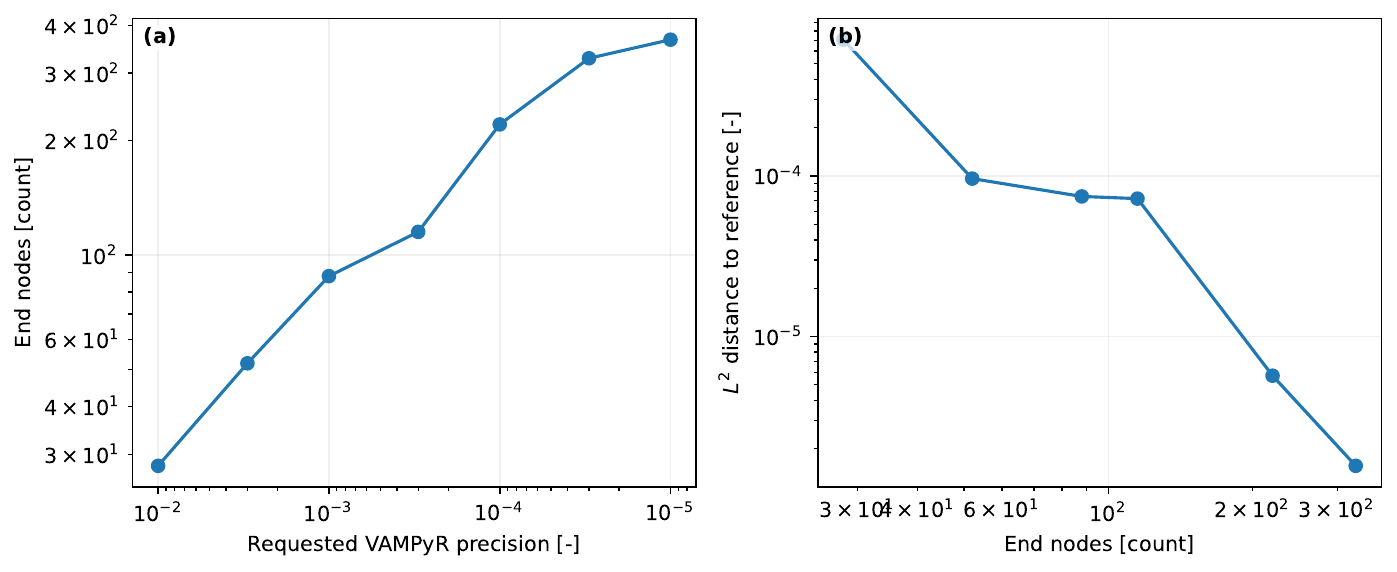}
\caption{
VAMPyR precision audit for the smooth multiscale target.
(a) Adaptive end-node count as the requested precision is tightened.
(b) $L^2$ distance to the $10^{-5}$ reference projection as a function of
adaptive complexity.
The reference point itself has zero distance and is not used to define a
finite error-reduction factor or a convergence order.
}
\label{fig:vampyr_precision}
\end{figure}

\section{Conclusions}
\label{sec:conclusions}

We have developed a variational framework in which an adaptive
multiresolution state generates its own intrinsic diffusion operator.  The
state combines an admissible multiresolution tree, the associated active
approximation space, and its coefficient representation.  Symmetric
nonnegative interactions on this hierarchy define a positive semidefinite
diffusion operator with the expected coefficient-space null mode.  Because the
basis is not assumed to be orthonormal, the corresponding function-space
operator is formulated through the Gram matrix, which preserves the
variational structure for general Riesz bases.

The coupled evolution of the represented function and the adaptive tree is
defined by a time-discrete variational principle.  This avoids
introducing a formal differential structure on the discrete refinement
hierarchy and yields existence of each adaptive update together with a
discrete energy inequality.  The resulting Adaptive Multiresolution Diffusion
Imaging (AMDI) formulation combines data fidelity, intrinsic multiresolution
diffusion, coefficient sparsity, and tree complexity within a single
state-dependent energy.  A split implementation retains the variational
structure, while a frozen-weight approximation provides an efficient
symmetric positive-definite coefficient update supplemented by an explicit
energy safeguard.

The numerical results support the principal structural and algorithmic
properties of the construction.  The assembled intrinsic operator satisfies
the expected symmetry, positive-semidefiniteness, and coefficient-space
kernel identities, while the refinement study examines interlevel consistency
for a specified refinement-decay parameter and the exact variational
diagnostic verifies discrete energy dissipation.  A separate Haar
multiresolution experiment demonstrates strong localization and compression
on heterogeneous data.  In the denoising benchmark, AMDI does not outperform
full-resolution total-variation denoising in unrestricted accuracy, but it
retains high structural fidelity with a small fraction of the full active
representation.  Held-out noise-realization tests confirm that this
reconstruction--representation balance persists across unseen realizations,
while also showing a larger accuracy gap to the strongest baselines than on
the calibration realization.  Energy monotonicity is maintained by the
implemented backtracking safeguard, although reconstruction quality is more
sensitive to the sparsity weight, tree penalty, and raw initialization
threshold than to the time step or diffusion weight.  The complexity-based
initialization removes the strongest dependence on an arbitrary
scale-dependent threshold.  Matched-representation ablations show that
state-dependent interactions consistently reduce RMSE relative to
state-independent weights, while the fixed-tree experiment does not establish
an additional denoising benefit from the intrinsic diffusion term itself.
Finally, a higher-order VAMPyR/MRCPP cross-check recovers the same qualitative
regional localization of multiresolution effort, and the separate precision
audit exhibits monotone self-consistency as the requested multiwavelet
precision is tightened.

The present work establishes a time-discrete adaptive variational theory and
its numerical realization.  It does not yet provide a continuous-time limit
for the coupled state evolution, nor does it establish asymptotic
approximation rates or optimal-complexity results.  These questions, together
with extensions to more general inverse problems and higher-order
multiwavelet backends, provide natural directions for future analysis.

\section*{Data Availability Statement}

The source code, numerical implementation, experiment scripts, and computational workflow required to reproduce the results reported in this study are openly available in the AMDI numerical validation repository at
\url{https://github.com/Christian48596/amdi_validation}.
All numerical data presented in the manuscript are generated computationally by the provided scripts, together with the corresponding parameter settings, software environment, validation tests, and instructions for reproducing the reported calculations. No proprietary or restricted datasets were used in this study.

\bibliographystyle{siamplain}
\bibliography{references}

@article{RudinOsherFatemi1992,
  author = {Leonid I. Rudin and Stanley Osher and Emad Fatemi},
  title = {Nonlinear Total Variation Based Noise Removal Algorithms},
  journal = {Physica D},
  year = {1992},
  volume = {60},
  number = {1--4},
  pages = {259--268},
  doi = {10.1016/0167-2789(92)90242-F}
}

@article{PeronaMalik1990,
  author = {Pietro Perona and Jitendra Malik},
  title = {Scale-Space and Edge Detection Using Anisotropic Diffusion},
  journal = {IEEE Transactions on Pattern Analysis and Machine Intelligence},
  year = {1990},
  volume = {12},
  number = {7},
  pages = {629--639},
  doi = {10.1109/34.56205}
}

@article{CoifmanMaggioni2006,
  author = {Ronald R. Coifman and Mauro Maggioni},
  title = {Diffusion Wavelets},
  journal = {Applied and Computational Harmonic Analysis},
  year = {2006},
  volume = {21},
  number = {1},
  pages = {53--94},
  doi = {10.1016/j.acha.2006.04.004}
}

@article{CohenDahmenDeVore2001,
  author = {Albert Cohen and Wolfgang Dahmen and Ronald DeVore},
  title = {Adaptive Wavelet Methods for Elliptic Operator Equations: Convergence Rates},
  journal = {Mathematics of Computation},
  year = {2001},
  volume = {70},
  number = {233},
  pages = {27--75},
  doi = {10.1090/S0025-5718-00-01252-7}
}

@article{CohenDahmenDeVore2002,
  author = {Albert Cohen and Wolfgang Dahmen and Ronald DeVore},
  title = {Adaptive Wavelet Methods II: Beyond the Elliptic Case},
  journal = {Foundations of Computational Mathematics},
  year = {2002},
  volume = {2},
  number = {3},
  pages = {203--245},
  doi = {10.1007/s102080010027}
}

@article{Stevenson2003,
  author = {Rob Stevenson},
  title = {Adaptive Solution of Operator Equations Using Wavelet Frames},
  journal = {SIAM Journal on Numerical Analysis},
  year = {2003},
  volume = {41},
  number = {3},
  pages = {1074--1100},
  doi = {10.1137/S0036142902407988}
}

@article{Dorfler1996,
  author = {Willy D{\"o}rfler},
  title = {A Convergent Adaptive Algorithm for {Poisson}'s Equation},
  journal = {SIAM Journal on Numerical Analysis},
  year = {1996},
  volume = {33},
  number = {3},
  pages = {1106--1124},
  doi = {10.1137/0733054}
}

@article{MorinNochettoSiebert2002,
  author = {Pedro Morin and Ricardo H. Nochetto and Kunibert G. Siebert},
  title = {Convergence of Adaptive Finite Element Methods},
  journal = {SIAM Review},
  year = {2002},
  volume = {44},
  number = {4},
  pages = {631--658},
  doi = {10.1137/S0036144502409093}
}

@article{BinevDahmenDeVore2004,
  author = {Peter Binev and Wolfgang Dahmen and Ronald DeVore},
  title = {Adaptive Finite Element Methods with Convergence Rates},
  journal = {Numerische Mathematik},
  year = {2004},
  volume = {97},
  pages = {219--268},
  doi = {10.1007/s00211-003-0492-7}
}

@book{AmbrosioGigliSavare2008,
  author = {Luigi Ambrosio and Nicola Gigli and Giuseppe Savar{\'e}},
  title = {Gradient Flows in Metric Spaces and in the Space of Probability Measures},
  publisher = {Birkh{\"a}user},
  address = {Basel},
  year = {2008},
  edition = {2},
  doi = {10.1007/978-3-7643-8722-8}
}

@article{Ambrosio1995,
  author = {Luigi Ambrosio},
  title = {Minimizing Movements},
  journal = {Rendiconti della Accademia Nazionale delle Scienze detta dei XL},
  year = {1995},
  volume = {19},
  pages = {191--246}
}

@article{GianazzaGobbinoSavare1994,
  author = {Ugo Gianazza and Massimo Gobbino and Giuseppe Savar{\'e}},
  title = {Evolution Problems and Minimizing Movements},
  journal = {Rendiconti Lincei - Matematica e Applicazioni},
  year = {1994},
  volume = {5},
  number = {4},
  pages = {289--296},
  note = {Available at \url{https://eudml.org/doc/244234}}
}

@book{Braides2002,
  author    = {Andrea Braides},
  title     = {{$\Gamma$}-Convergence for Beginners},
  publisher = {Oxford University Press},
  year      = {2002},
  doi       = {10.1093/acprof:oso/9780198507840.001.0001}
}

@book{Braides2014,
  author    = {Andrea Braides},
  title     = {Local Minimization, Variational Evolution and {$\Gamma$}-Convergence},
  publisher = {Springer},
  year      = {2014},
  doi       = {10.1007/978-3-319-01982-6}
}

@article{MielkeOrtiz2008,
  author = {Alexander Mielke and Michael Ortiz},
  title = {A Class of Minimum Principles for Characterizing the Trajectories and the Relaxed Solutions of Rate-Independent Systems},
  journal = {ESAIM: Control, Optimisation and Calculus of Variations},
  year = {2008},
  volume = {14},
  number = {3},
  pages = {494--516},
  doi = {10.1051/cocv:2008006}
}

@article{BelkinNiyogi2003,
  author = {Mikhail Belkin and Partha Niyogi},
  title = {{Laplacian} Eigenmaps for Dimensionality Reduction and Data Representation},
  journal = {Neural Computation},
  year = {2003},
  volume = {15},
  number = {6},
  pages = {1373--1396},
  doi = {10.1162/089976603321780317}
}

@article{BelkinNiyogi2008,
  author = {Mikhail Belkin and Partha Niyogi},
  title = {Towards a Theoretical Foundation for {Laplacian}-Based Manifold Methods},
  journal = {Journal of Computer and System Sciences},
  year = {2008},
  volume = {74},
  number = {8},
  pages = {1289--1308},
  doi = {10.1016/j.jcss.2007.08.006}
}

@article{CoifmanLafon2006,
  author = {Ronald R. Coifman and St{\'e}phane Lafon},
  title = {Diffusion Maps},
  journal = {Applied and Computational Harmonic Analysis},
  year = {2006},
  volume = {21},
  number = {1},
  pages = {5--30},
  doi = {10.1016/j.acha.2006.04.006}
}

@article{CoifmanEtAl2005,
  author = {Ronald R. Coifman and St{\'e}phane Lafon and Ann B. Lee and Mauro Maggioni and Boaz Nadler and Frederick Warner and Steven W. Zucker},
  title = {Geometric Diffusions as a Tool for Harmonic Analysis and Structure Definition of Data: Diffusion Maps},
  journal = {Proceedings of the National Academy of Sciences},
  year = {2005},
  volume = {102},
  number = {21},
  pages = {7426--7431},
  doi = {10.1073/pnas.0500334102}
}

@article{Singer2006,
  author = {Amit Singer},
  title = {From Graph to Manifold {Laplacian}: The Convergence Rate},
  journal = {Applied and Computational Harmonic Analysis},
  year = {2006},
  volume = {21},
  number = {1},
  pages = {128--134},
  doi = {10.1016/j.acha.2006.03.004}
}

@article{SingerWu2012,
  author = {Amit Singer and Hau-Tieng Wu},
  title = {Vector Diffusion Maps and the Connection {Laplacian}},
  journal = {Communications on Pure and Applied Mathematics},
  year = {2012},
  volume = {65},
  number = {8},
  pages = {1067--1144},
  doi = {10.1002/cpa.21395}
}

@book{Chung1997,
  author = {Fan R. K. Chung},
  title = {Spectral Graph Theory},
  publisher = {American Mathematical Society},
  year = {1997},
  volume = {92},
  series = {CBMS Regional Conference Series in Mathematics}
}

@article{DonohoJohnstone1994,
  author = {David L. Donoho and Iain M. Johnstone},
  title = {Ideal Spatial Adaptation by Wavelet Shrinkage},
  journal = {Biometrika},
  year = {1994},
  volume = {81},
  number = {3},
  pages = {425--455},
  doi = {10.1093/biomet/81.3.425}
}

@article{Donoho1995,
  author = {David L. Donoho},
  title = {Nonlinear Solution of Linear Inverse Problems by Wavelet--Vaguelette Decomposition},
  journal = {Applied and Computational Harmonic Analysis},
  year = {1995},
  volume = {2},
  number = {2},
  pages = {101--126},
  doi = {10.1006/acha.1995.1008}
}

@article{FredianiFossgaardFlaRuud2013,
  author = {Luca Frediani and Eirik Fossgaard and Tor Fl{\aa} and Kenneth Ruud},
  title = {Fully Adaptive Algorithms for Multivariate Integral Equations Using the Non-Standard Form and Multiwavelets with Applications to the {Poisson} and Bound-State {Helmholtz} Kernels in Three Dimensions},
  journal = {Molecular Physics},
  year = {2013},
  volume = {111},
  number = {9-11},
  pages = {1143--1160},
  doi = {10.1080/00268976.2013.810793}
}

@article{BeylkinMonzon2005,
  author = {Gregory Beylkin and Lucas Monz{\'o}n},
  title = {On Approximation of Functions by Exponential Sums},
  journal = {Applied and Computational Harmonic Analysis},
  year = {2005},
  volume = {19},
  number = {1},
  pages = {17--48},
  doi = {10.1016/j.acha.2005.01.003}
}

@article{HarrisonEtAl2004,
  author = {Robert J. Harrison and Gregory I. Fann and Takeshi Yanai and Zhengting Gan and Gregory Beylkin},
  title = {Multiresolution Quantum Chemistry in Multiwavelet Bases},
  journal = {Journal of Chemical Physics},
  year = {2004},
  volume = {121},
  pages = {11587--11598},
  doi = {10.1063/1.1818731}
}

@article{FannEtAl2004,
  author = {Gregory I. Fann and Robert J. Harrison and Gregory Beylkin},
  title = {Multiresolution Quantum Chemistry: Basic Theory and Initial Applications},
  journal = {Journal of Chemical Physics},
  year = {2004},
  volume = {121},
  pages = {11505--11512},
  doi = {10.1063/1.1818730}
}

@article{WindEtAl2023,
  author = {Peter Wind and Magnar Bj{\o}rgve and Anders Brakestad and Gabriel A. Gerez Sazo and Stig Rune Jensen and Roberto Di Remigio and Luca Frediani},
  title = {MRChem: Multiresolution Analysis Code for Molecular Electronic Structure Calculations},
  journal = {Journal of Chemical Theory and Computation},
  year = {2023},
  volume = {19},
  pages = {137--146},
  doi = {10.1021/acs.jctc.2c00982}
}

@article{BjorgveEtAl2024,
  author = {Magnar Bj{\o}rgve and Christian Tantardini and Stig Rune Jensen and Gabriel Gerez Sazo and Peter Wind and Roberto Di Remigio and Evgueni Dinvay and Luca Frediani},
  title = {{VAMPyR}: A High-Level {Python} Library for Mathematical Operations in a Multiwavelet Representation},
  journal = {Journal of Chemical Physics},
  year = {2024},
  doi = {10.1063/5.0195036}
}

@article{DinvayZabelinaFrediani2024,
  author = {Evgueni Dinvay and Yuliya Zabelina and Luca Frediani},
  title = {Multiresolution of the One Dimensional Free-Particle Propagator. Part 1: Construction},
  journal = {Computer Physics Communications},
  year = {2024},
  doi = {10.1016/j.cpc.2024.109462}
}

@article{HarrisonEtAlMADNESS2008,
  author = {Robert J. Harrison and Gregory I. Fann and Takeshi Yanai and Zhengting Gan and Gregory Beylkin},
  title = {{MADNESS}: A Multiresolution Adaptive Numerical Environment for Scientific Simulation},
  journal = {SIAM Journal on Scientific Computing},
  year = {2008},
  volume = {30},
  number = {1},
  pages = {235--262},
  doi = {10.1137/06066594X}
}

@article{LiEtAl2021FNO,
  author = {Zongyi Li and Nikola Kovachki and Kamyar Azizzadenesheli and Burigede Liu and Kaijun Bhattacharya and Andrew Stuart and Anima Anandkumar},
  title = {Fourier Neural Operator for Parametric Partial Differential Equations},
  journal = {International Conference on Learning Representations},
  year = {2021},
  note = {Available at \url{https://openreview.net/forum?id=c8P9NQVtmnO}}
}

@article{LuEtAl2021DeepONet,
  author = {Lu Lu and Pengzhan Jin and George Em Karniadakis},
  title = {Learning Nonlinear Operators via {DeepONet} Based on the Universal Approximation Theorem of Operators},
  journal = {Nature Machine Intelligence},
  year = {2021},
  volume = {3},
  pages = {218--229},
  doi = {10.1038/s42256-021-00302-5}
}

@article{KovachkiEtAl2023,
  author = {Nikola Kovachki and Samuel Lanthaler and Siddhartha Mishra and others},
  title = {Neural Operator: Learning Maps Between Function Spaces},
  journal = {Journal of Machine Learning Research},
  year = {2023},
  volume = {24},
  pages = {1--97}
}

@article{TripuraChakraborty2023,
  author = {Tapas Tripura and Souvik Chakraborty},
  title = {Wavelet Neural Operator: A Neural Operator for Parametric {PDEs}},
  journal = {Computer Methods in Applied Mechanics and Engineering},
  year = {2023},
  volume = {404},
  pages = {115783},
  doi = {10.1016/j.cma.2022.115783}
}

@article{LiKovachkiAzizzadenesheli2020,
  author = {Zongyi Li and Nikola Kovachki and Kamyar Azizzadenesheli and others},
  title = {Neural Operator: Graph Kernel Network for Partial Differential Equations},
  journal = {arXiv},
  year = {2020}
}

@article{GreengardRokhlin1987,
  author = {Leslie Greengard and Vladimir Rokhlin},
  title = {A Fast Algorithm for Particle Simulations},
  journal = {Journal of Computational Physics},
  year = {1987},
  volume = {73},
  pages = {325--348},
  doi = {10.1016/0021-9991(87)90140-9}
}

@article{GilboaOsher2008,
  author = {Guy Gilboa and Stanley Osher},
  title = {Nonlocal Operators with Applications to Image Processing},
  journal = {Multiscale Modeling and Simulation},
  year = {2008},
  volume = {7},
  number = {3},
  pages = {1005--1028},
  doi = {10.1137/070698592}
}

@article{BuadesCollMorel2005,
  author = {Antoni Buades and Bartomeu Coll and Jean-Michel Morel},
  title = {A Non-Local Algorithm for Image Denoising},
  journal = {CVPR},
  year = {2005}
}

@article{DeVore1998,
  author = {Ronald A. DeVore},
  title = {Nonlinear Approximation},
  journal = {Acta Numerica},
  year = {1998},
  volume = {7},
  pages = {51--150},
  doi = {10.1017/S0962492900002816}
}

@book{Temlyakov2011,
  author = {Vladimir N. Temlyakov},
  title = {Greedy Approximation},
  publisher = {Cambridge University Press},
  year = {2011},
  doi = {10.1017/CBO9781139003810}
}

@article{Dahmen1997,
  author = {Wolfgang Dahmen},
  title = {Wavelet and Multiscale Methods for Operator Equations},
  journal = {Acta Numerica},
  year = {1997},
  volume = {6},
  pages = {55--228},
  doi = {10.1017/S0962492900002725}
}

@article{Stevenson2004Compressibility,
  author = {Rob Stevenson},
  title = {On the Compressibility of Operators in Wavelet Coordinates},
  journal = {SIAM Journal on Mathematical Analysis},
  year = {2004},
  volume = {35},
  number = {5},
  pages = {1110--1132},
  doi = {10.1137/S0036141002411520}
}

@article{HackbuschBorm2002,
  author = {Wolfgang Hackbusch and Steffen B{\"o}rm},
  title = {Data-Sparse Approximation by Adaptive H$^2$-Matrices},
  journal = {Computing},
  year = {2002},
  volume = {69},
  number = {1},
  pages = {1--35},
  doi = {10.1007/s00607-002-1450-4}
}

@book{Hackbusch2017,
  author = {Wolfgang Hackbusch},
  title = {Numerical Tensor Calculus},
  publisher = {Springer},
  year = {2012},
  doi = {10.1007/978-3-642-28027-6}
}

@article{BeylkinMohlenkamp2002,
  author = {Gregory Beylkin and Martin J. Mohlenkamp},
  title = {Numerical Operator Calculus in Higher Dimensions},
  journal = {Proceedings of the National Academy of Sciences},
  year = {2002},
  volume = {99},
  number = {16},
  pages = {10246--10251},
  doi = {10.1073/pnas.152329799}
}

@article{GuptaXiaoBogdan2021,
  author = {Gaurav Gupta and Xiongye Xiao and Paul Bogdan},
  title = {Multiwavelet-Based Operator Learning for Differential Equations},
  journal = {Advances in Neural Information Processing Systems},
  year = {2021},
  volume = {34},
  pages = {24048--24062},
  note = {Available at \url{https://papers.nips.cc/paper/2021/hash/c9e5c2b59d98488fe1070e744041ea0e-Abstract.html}}
}

@article{CohenDahmenDeVore2005,
  author = {Albert Cohen and Wolfgang Dahmen and Ronald A. DeVore},
  title = {Adaptive Wavelet Schemes for Nonlinear Variational Problems},
  journal = {SIAM Journal on Numerical Analysis},
  year = {2005},
  volume = {41},
  number = {5},
  pages = {1785--1823},
  doi = {10.1137/S0036142902412269}
}

@article{DahlkeDahmenUrban2002,
  author = {Stephan Dahlke and Wolfgang Dahmen and Karsten Urban},
  title = {Adaptive Wavelet Methods for Saddle Point Problems: Convergence Rates},
  journal = {SIAM Journal on Numerical Analysis},
  year = {2002},
  volume = {40},
  number = {4},
  pages = {1230--1262}
}

@book{Cohen2003Book,
  author = {Albert Cohen},
  title = {Numerical Analysis of Wavelet Methods},
  publisher = {Elsevier},
  year = {2003}
}

@article{Dahmen2001JCAM,
  author = {Wolfgang Dahmen},
  title = {Wavelet Methods for {PDEs}: Some Recent Developments},
  journal = {Journal of Computational and Applied Mathematics},
  year = {2001},
  volume = {128},
  pages = {133--185}
}

@article{DahmenKunoth1992,
  author = {Wolfgang Dahmen and Angela Kunoth},
  title = {Multilevel Preconditioning},
  journal = {Numerische Mathematik},
  year = {1992},
  volume = {63},
  pages = {315--344}
}

@article{Beylkin1992Operators,
  author = {Gregory Beylkin},
  title = {On the Representation of Operators in Bases of Compactly Supported Wavelets},
  journal = {SIAM Journal on Numerical Analysis},
  year = {1992},
  volume = {29},
  number = {6},
  pages = {1716--1740},
  doi = {10.1137/0729097}
}

@article{BeylkinCoifmanRokhlin1991,
  author = {Gregory Beylkin and Ronald R. Coifman and Vladimir Rokhlin},
  title = {Fast Wavelet Transforms and Numerical Algorithms I},
  journal = {Communications on Pure and Applied Mathematics},
  year = {1991},
  volume = {44},
  number = {2},
  pages = {141--183},
  doi = {10.1002/cpa.3160440202}
}

@article{AlpertBeylkinCoifmanRokhlin1993,
  author = {Bradley Alpert and Gregory Beylkin and Ronald R. Coifman and Vladimir Rokhlin},
  title = {Wavelet-like Bases for the Fast Solution of Second-Kind Integral Equations},
  journal = {SIAM Journal on Scientific Computing},
  year = {1993},
  volume = {14},
  number = {1},
  pages = {159--184},
  doi = {10.1137/0914010}
}

@article{Alpert1993,
  author = {Bradley Alpert},
  title = {A Class of Bases in $L^2$ for the Sparse Representation of Integral Operators},
  journal = {SIAM Journal on Mathematical Analysis},
  year = {1993},
  volume = {24},
  number = {1},
  pages = {246--262},
  doi = {10.1137/0524016}
}

@article{BeylkinKeiser1997,
  author = {Gregory Beylkin and James M. Keiser},
  title = {On the Adaptive Numerical Solution of Nonlinear Partial Differential Equations in Wavelet Bases},
  journal = {Journal of Computational Physics},
  year = {1997},
  volume = {132},
  number = {2},
  pages = {233--259},
  doi = {10.1006/jcph.1996.5639}
}

@article{BeylkinMonzon2002,
  author = {Gregory Beylkin and Lucas Monz{\'o}n},
  title = {On Generalized {Gaussian} Quadratures for Exponentials and Their Applications},
  journal = {Applied and Computational Harmonic Analysis},
  year = {2002},
  volume = {12},
  number = {3},
  pages = {332--373},
  doi = {10.1006/acha.2002.0351}
}

@article{BeylkinMohlenkamp2005SIAM,
  author = {Gregory Beylkin and Martin J. Mohlenkamp},
  title = {Algorithms for Numerical Analysis in High Dimensions},
  journal = {SIAM Journal on Scientific Computing},
  year = {2005},
  volume = {26},
  number = {6},
  pages = {2133--2159},
  doi = {10.1137/040604959}
}

@article{Brandt1977,
  author = {Achi Brandt},
  title = {Multi-Level Adaptive Solutions to Boundary-Value Problems},
  journal = {Mathematics of Computation},
  year = {1977},
  volume = {31},
  number = {138},
  pages = {333--390},
  doi = {10.1090/S0025-5718-1977-0431719-X}
}

@article{BergerOliger1984,
  author = {Marsha J. Berger and Joseph Oliger},
  title = {Adaptive Mesh Refinement for Hyperbolic Partial Differential Equations},
  journal = {Journal of Computational Physics},
  year = {1984},
  volume = {53},
  number = {3},
  pages = {484--512},
  doi = {10.1016/0021-9991(84)90073-1}
}

@article{ElmoatazLezorayBougleux2008,
  author  = {Abderrahim Elmoataz and Olivier Lezoray and S{\'e}bastien Bougleux},
  title   = {Nonlocal Discrete Regularization on Weighted Graphs:
             A Framework for Image and Manifold Processing},
  journal = {IEEE Transactions on Image Processing},
  volume  = {17},
  number  = {7},
  pages   = {1047--1060},
  year    = {2008},
  doi     = {10.1109/TIP.2008.924284}
}

\end{document}